\newtheorem{Theorem}{Theorem}
\newtheorem*{Theorem*}{Theorem}
\newtheorem*{Corollary*}{Corollary}
\newtheorem{Lemma}{Lemma}
\newtheorem*{Lemma*}{Lemma}
\newtheorem{Proposition}{Proposition}
\newtheorem*{Proposition*}{Proposition}
\newtheorem*{Definition*}{Definition}
\newtheorem{Remark}{Remark}
\newtheorem*{Remark*}{Remark}
\newtheorem*{Example*}{Example}
\newcommand{\ptl}{\partial}
\DeclareMathOperator{\supp}{supp}
\title[3D Hyperviscous Navier-Stokes Equations]{Non-uniqueness of Weak Solutions to Hyperviscous Navier-Stokes Equations - On Sharpness of J.-L. Lions Exponent}
\author[]{Tianwen Luo$^*$}
\thanks{$^*$Yau Mathematical Sciences Center, Tsinghua University, China. twluo@mail.tsinghua.edu.cn}
\author[]{Edriss S. Titi$^\dagger$}
\thanks{$^\dagger$Department of Mathematics, Texas A\&M University, 3368 TAMU, College Station, TX 77843-3368,	USA. Department of Applied Mathematics and Theoretical Physics, University of Cambridge, Wilberforce Road, Cambridge CB3 0WA, UK. Department of Computer Science and Applied Mathematics, The Weizmann Institute of Science, Rehovot 76100, Israel. titi@math.tamu.edu;  Edriss.Titi@damtp.cam.ac.uk; edriss.titi@weizmann.ac.il}
\date{January 13, 2020}
\begin{document}
	
\begin{abstract}
	Using the convex integration technique for the three-dimensional Navier-Stokes equations introduced by T. Buckmaster and V. Vicol, it is shown the existence of non-unique weak solutions for the 3D Navier-Stokes equations with fractional hyperviscosity $(-\Delta)^{\theta}$, whenever the exponent $\theta$ is less than J.-L. Lions' exponent $5/4$, i.e., when $\theta < 5/4$.	
\end{abstract}

\subjclass[2010]{35Q30}
\keywords{Non-uniqueness, Weak solutions, Wild solutions, Navier-Stokes equations, Hyperviscosity, Convex integration.}

\maketitle

\section{Introduction}

In this paper we consider the question of non-uniquness of weak solutions to the
 3D Navier-Stokes equations with fractional viscosity (FVNSE) on $\mathbb{T}^3$
\begin{align}\label{eq:FVNSE}
\begin{cases}
\ptl_t v + \nabla \cdot (v \otimes v) +  \nabla p + \nu(-  \Delta)^{\theta} v = 0,\\
\nabla \cdot v = 0,
\end{cases}
\end{align}
where $\theta \in \mathbb{R}$ is a fixed constant, and for $u \in C^{\infty}(\mathbb{T}^3)$ with $\int_{\mathbb{T}^3} u(x) dx =0$,  the fractional Laplacian is defined via the Fourier transform as
\begin{align*}
\mathcal{F}((-  \Delta)^{\theta} u)(\xi) = |\xi|^{2\theta}\mathcal{F}(u)(\xi), \quad \xi \in \mathbb{Z}^3.
\end{align*}
\begin{Definition*}[weak solutions]
	A vector field $v \in C^0_{weak}(\mathbb{R};L^2(\mathbb{T}^3))$ is called a weak solution to the FVNSE if it solves \eqref{eq:FVNSE} in the sense of distribution.
\end{Definition*}

When $\theta = 1$, FVNSE \eqref{eq:FVNSE} is the standard Navier-Stokes equations.
J.-L. Lions  first considered FVNSE \eqref{eq:FVNSE} in \cite{Lions59}, and showed the existence and uniqueness of weak solutions to the initial value problem, which also satisfied the energy equality, for $\theta \in [5/4,\infty)$ in \cite{Lions69}. Moreover, an analogue of the Caffarelli-Kohn-Nirenberg \cite{CKN} result was established in \cite{KatzPavlovic} for the FVNSE system \eqref{eq:FVNSE}, showing that the Hausdorff dimension of the singular set, in space and time, is bounded by $5 - 4\theta$ for $\theta \in (1,5/4)$. The existence, uniqueness, regularity and stability of solutions to the FVNSE have been studied in \cite{OlsonTiti05,JiuWang14,Wu03,Tao09} and references therein. Very recently, using the method of convex integration introduced in \cite{dLSz4}, Colombo, De Lellis and De Rosa in \cite{CdLdR18} showed the non-uniquenss of Leray weak solutions to  FVNSE \eqref{eq:FVNSE}  for $\theta \in (0,1/5)$ and for $\theta \in (0,1/3)$ in \cite{DeRosa19}.

In the recent breakthrough work \cite{BV17}, Buckmaster and Vicol obtained non-uniqueness of weak solutions to the three-dimensional Navier-Stokes equations. They developed a new convex integration scheme in Sobolev spaces using intermittent Beltrami flows which combined concentrations and oscillations. Later, the idea of using intermittent flows was used to study non-uniqueness for transport equations in \cite{MS17,MS19,Modena-Sattig-19} employing scaled Mikado waves, and for stationary Navier-Stokes equations in \cite{Luo19,Cheskidov-Luo-19} employing viscous eddies.

The schemes in \cite{BV17,MS17} are based on the convex integration framework in H\"{o}lder spaces for the Euler equations, introduced by De Lellis and Sz{\'e}kelyhidi in \cite{dLSz4}, subsequently refined in \cite{Isett12,Buckmaster2013transporting,Buckmaster2014,DaneriSzekelyhidi16}, and culminated in the proof of the second half of the Onsager conjecture by Isett in \cite{Isett16}; also see \cite{BDSV17} for a shorter proof. For the first half of the Onsager conjecture, see, e.g.,  \cite{CET94,BTi}, and the references therein.

The main contribution of this note is to show that the results in Buckmaster-Vicol's paper hold for FVNSE \eqref{eq:FVNSE} for $\theta < 5/4$:

\begin{Theorem}\label{Thm:Main}
	Assume that $\theta \in [1, 5/4)$.
	Suppose $u$ is a smooth divergence-free vector field, define on $\mathbb{R}_+ \times \mathbb{T}^3$, with compact support in time and satisfies the condition
	\begin{align*}
	\int_{\mathbb{T}^3} u(t,x)dx \equiv 0.
	\end{align*}
	Then for any given $\varepsilon_0 > 0$, there exists a weak solution $v$ to the FVNSE \eqref{eq:FVNSE}, with compact support in time, satisfying
	\begin{align*}
	\|v - u\|_{L^{\infty}_t W^{2\theta - 1,1}_x} < \varepsilon_0.
	\end{align*}
	As a consequence there are infinitely many weak solutions of the FVNSE \eqref{eq:FVNSE} which are compactly supported in time; in particular, there are infinitely many weak solutions with initial values zero.
\end{Theorem}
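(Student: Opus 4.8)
The plan is to adapt the Buckmaster--Vicol convex integration scheme from \cite{BV17} (the case $\theta = 1$) to the hyperviscous range $\theta \in [1, 5/4)$. The heart of the argument is an iteration: one constructs a sequence of smooth solutions $(v_q, p_q, \mathring R_q)$ to the \emph{hyperviscous Navier--Stokes--Reynolds system}
\begin{align*}
\ptl_t v_q + \nabla \cdot (v_q \otimes v_q) + \nabla p_q + \nu (-\Delta)^\theta v_q = \nabla \cdot \mathring R_q, \qquad \nabla \cdot v_q = 0,
\end{align*}
where $\mathring R_q$ is a symmetric traceless tensor (the Reynolds stress) whose size is controlled by a rapidly decaying parameter $\delta_{q+1}$, while the velocity increments $v_{q+1} - v_q$ are controlled in $L^2$ by $\delta_{q+1}^{1/2}$ and in higher norms by powers of a large frequency parameter $\lambda_{q+1}$. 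Passing to the limit $q \to \infty$ yields a weak solution $v = \lim v_q$ with $\mathring R_q \to 0$; choosing the first step $v_0$ to approximate the prescribed $u$ and tracking the errors gives $\|v - u\|_{L^\infty_t W^{2\theta-1,1}_x} < \varepsilon_0$. The final ``infinitely many solutions'' assertion is then immediate: varying $\varepsilon_0$ (or the auxiliary energy profile used to drive the iteration) produces distinct weak solutions, and since $u$ may be taken to be identically zero, one obtains infinitely many nontrivial weak solutions, compactly supported in time, with zero initial data, in flagrant violation of uniqueness.

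The construction of $v_{q+1}$ from $v_q$ follows the intermittent-Beltrami-flow perturbation of \cite{BV17}: one mollifies $v_q$ and $\mathring R_q$ in space and time, decomposes $-\mathring R_q$ (suitably normalized) via the geometric lemma into a sum of squares supported on finitely many Beltrami directions, and sets the principal perturbation $w_{q+1}^{(p)}$ to be a superposition of \emph{intermittent Beltrami waves} $\mathbb{W}_{(\xi)}$ --- Beltrami plane waves concentrated, via a small parameter, on thin tubes --- oscillated at frequency $\lambda_{q+1}$ and modulated by the amplitude functions coming from the decomposition. One then adds an incompressibility corrector $w_{q+1}^{(c)}$ and a temporal corrector $w_{q+1}^{(t)}$ so that $w_{q+1}$ is divergence-free and the low-frequency error is cancelled. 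The new Reynolds stress $\mathring R_{q+1}$ is obtained by applying the inverse divergence operator $\mathcal{R}$ to the remaining terms --- the oscillation error, the transport error, the Nash error, the corrector errors, and crucially the new \emph{dissipative error} $\nu \mathcal{R}(-\Delta)^\theta w_{q+1}$ coming from the hyperviscous term --- and one must verify the inductive estimates on $\|\mathring R_{q+1}\|_{L^1}$ and on the derivatives of $w_{q+1}$.

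The main obstacle, and the only place where $\theta$ enters quantitatively, is controlling the hyperviscous error $\nu \mathcal{R}(-\Delta)^\theta w_{q+1}$. Since $w_{q+1}$ oscillates at frequency $\lambda_{q+1}$, one has $\|(-\Delta)^\theta w_{q+1}\|_{L^p} \lesssim \lambda_{q+1}^{2\theta} \|w_{q+1}\|_{L^p}$, and applying $\mathcal{R}$ (which gains one derivative) the dissipative error is of size $\sim \nu \lambda_{q+1}^{2\theta - 1} \|w_{q+1}\|_{L^p}$, measured in the intermittent $L^p$ norm adapted to the concentration parameter. For this to be absorbed into $\delta_{q+2}$ one needs the gain from the concentration (the $L^p$ smallness of intermittent flows) to beat the loss $\lambda_{q+1}^{2\theta-1}$; this is exactly where the threshold $\theta < 5/4$ appears --- it is the condition under which the intermittency dimensions can be chosen so that all error terms, including the new one, close. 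Concretely, one must revisit the choice of the parameters (the oscillation frequency $\lambda_{q+1} = a^{b^{q+1}}$, the concentration parameters $r_\perp, r_\parallel$, the time-oscillation parameter $\mu$, and the small $\beta > 0$ measuring the regularity $C^\beta_t L^2_x$) and check that the system of inequalities governing the iteration remains solvable precisely when $2\theta - 1 < 3/2$, i.e., $\theta < 5/4$. Everything else --- the geometric lemma, the properties of intermittent Beltrami waves, the stationary-phase / inverse-divergence estimates, the gluing of the temporal corrector, and the convergence argument --- carries over from \cite{BV17} essentially verbatim, with only bookkeeping changes to accommodate the extra power of $\lambda_{q+1}$ and the fact that the target norm is $W^{2\theta-1,1}_x$ rather than $L^2_x$.
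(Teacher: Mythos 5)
Your proposal follows essentially the same strategy as the paper --- a Buckmaster--Vicol iteration with intermittent Beltrami waves, in which the only genuinely new error is the dissipative term $\nu\,\mathcal{R}(-\Delta)^\theta w_{q+1}$, and your arithmetic $2\theta-1<3/2$ correctly locates the $5/4$ threshold. One clarification worth recording: because the paper targets only $C^0_t L^2_x$ convergence with $L^\infty_t W^{2\theta-1,1}_x$ smallness (rather than $C^0_t H^\beta_x$ regularity as in \cite{BV17}), it uses a deliberately stripped-down iteration --- no mollification of $(v_q,R_q)$, no super-exponential frequency ladder or $C^1$ growth tracking, $v_0 = u$ exactly, $\delta_{q+1}$ chosen freely at each step, and a single concentration parameter $r$ (together with $\sigma,\mu$) rather than the $(r_\perp,r_\parallel)$ of later intermittent-jet schemes; and there is no gluing step in either \cite{BV17} or this paper, as gluing belongs to Isett's Onsager argument. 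These are simplifications, not gaps: the heavier version you sketch would also close, as the paper's own Remark acknowledges.
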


\begin{Remark}
	In the above theorem we assume that $\theta \in [1, 5/4)$. However, using the constructions in \cite{BV17} with a slightly different choice of parameters, one can actually show that Theorem 1.2 and Theorem 1.3 in \cite{BV17} hold for the 3D FVNSE, i.e., there exist non-unique weak solutions $v \in C_t^0 W_x^{\beta,2}$, with a different $\beta > 0$, depending on $\theta$. However, in this paper we choose to prove a weaker result, Theorem \ref{Thm:Main}, in order to simplify the presentation while retaining the main idea.
\end{Remark}

\begin{Remark}
	For the case $\theta \in (-\infty,1)$, the same construction also yields weak solutions $v \in C^0_t L^2_x \cap C^0_t W^{1,1}_x$ with a suitable choice of parameters.
\end{Remark}

We now make some comments on the analysis in this paper. Using the technique in \cite{BV17}, we adapt a convex integration scheme with intermittent Beltrami flows as the building blocks. The main difficulty in a convex integration scheme for  (FVNSE), is the error induced by the frictional viscosity $\nu(-  \Delta)^{\theta} v$, which is greater for a larger exponent $\theta$. This error is controlled by making full use of the concentration effect of intermittent flows introduced in  \cite{BV17}. As it is shown in the crucial estimate \eqref{est-R-linear}, the error is controllable only for $\theta < 5/4$. Compared with \cite{BV17}, since our goal is to construct weak solutions $v \in C^0_t L^2_{x,weak} \cap L^{\infty}_t W^{2\theta - 1,1}_x$, we adapt a slightly simpler cut-off function and prove only estimates that are sufficient for this purpose.

%In the rest of the this paper, using the technique developed in \cite{BV17} and some ideas in \cite{MS17}, we present a slightly simplified construction to show the following simple result:

\section{Outline}

\subsection{Iteration lemma}
Following \cite{BV17}, we
consider the approximate system
\begin{align}\label{eq:NS-alpha-Reynold-stress}
\begin{cases}
\ptl_t v + \nabla \cdot (v \otimes v) +  \nabla p + \nu(-  \Delta)^{\theta} v = \nabla \cdot R,\\
\nabla \cdot v = 0,
\end{cases}
\end{align}
where $R$ is a symmetric $3 \times 3$ matrix.

\begin{Lemma}[Iteration Lemma for $L^2$ weak solutions]\label{Lemma:Iteration}
	Let $\theta \in (-\infty, 5/4)$.
	Assume $(v_q, R_q)$ is a smooth solution to \eqref{eq:NS-alpha-Reynold-stress} with
	\begin{align}
	\|R_q\|_{L^{\infty}_t L^1_x} &\leq \delta_{q+1}, \label{est-R_q-L^1}
	\end{align}
	for some $\delta_{q+1} > 0$.
	Then for any given $\delta_{q+2} > 0$, there exists a smooth solution $(v_{q+1}, R_{q+1})$ of \eqref{eq:NS-alpha-Reynold-stress}
	with
	\begin{align}
	\|R_{q+1}\|_{L^{\infty}_t L^1_x} &\leq \delta_{q+2}, \label{est-R-q+1}\\
	\text{and}\quad  \operatorname{supp}_t v_{q+1} \cup \operatorname{supp}_t R_{q+1}  &\subset N_{\delta_{q+1}}(\operatorname{supp}_t v_{q} \cup \operatorname{supp}_t R_{q}). \label{est-supp-v_q-R_q}
	\end{align}
	Here for a given set $A \subset \mathbb{R}$,  the $\delta$-neighborhood of $A$ is denoted by
	\begin{align*}
	N_{\delta}(A) = \{ y \in \mathbb{R}: \exists y' \in A, |y-y'| < \delta \}.
	\end{align*}
	Furthermore, the increment $w_{q+1} = v_{q+1} - v_q$ satisfies the estimates
	\begin{align}
	\|w_{q+1}\|_{L^{\infty}_t L^2_x} &\leq C \delta_{q+1}^{1/2}, \label{est-w_q-L^2_x}\\
	\|w_{q+1}\|_{L^{\infty}_t W^{2 \theta - 1,1}_x} &\leq \delta_{q+2}, \label{est-w_q-W^1_x}
	\end{align}
	where the positive constant $C$ depends only on $\theta$.
\end{Lemma}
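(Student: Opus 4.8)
The plan is to run the intermittent convex‑integration scheme of Buckmaster--Vicol \cite{BV17} with the $L^1$‑based bookkeeping forced by our low integrability: the building blocks, the geometric lemma and essentially all of the error estimates are imported from \cite{BV17}, and the only genuinely new ingredient is the control of the error produced by the (hyper)viscous term $\nu(-\Delta)^{\theta}w_{q+1}$. Concretely, fix a large frequency $\lambda=\lambda_{q+1}$ and choose, as powers of $\lambda$, a mollification length $\ell$, spatial concentration parameters $r_\perp=\lambda^{-\gamma_\perp}$, $r_\parallel=\lambda^{-\gamma_\parallel}$ for the building blocks, and a temporal parameter $\mu$; the exponents are fixed at the end so that the finitely many smallness requirements below hold simultaneously, and $\lambda$ is then taken large depending on $\theta,\delta_{q+1},\delta_{q+2}$ and on $v_q,R_q$. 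Mollify $(v_q,R_q)$ in space to a smooth solution $(v_\ell,R_\ell)$ of \eqref{eq:NS-alpha-Reynold-stress} with mollified stress $\mathring{R}_\ell$ satisfying $\|\mathring{R}_\ell\|_{L^\infty_tL^1_x}\lesssim\delta_{q+1}$ together with $C^N_x$ bounds that lose powers of $\ell^{-1}$, and with time‑support enlarged by at most $\ell\ll\delta_{q+1}$. Using a simple time cut‑off (simpler than in \cite{BV17}, as announced) localizing to intervals of length $\sim\mu^{-1}$, apply the geometric lemma to obtain amplitudes $a_{(\xi)}(t,x)$ over a finite set of directions with $\sum_\xi a_{(\xi)}^2\fint_{\mathbb T^3}\mathbb W_{(\xi)}\otimes\mathbb W_{(\xi)}=\rho\,\mathrm{Id}-\mathring{R}_\ell$, where $\rho\simeq(\ell^2+|\mathring{R}_\ell|^2)^{1/2}$ dominates $|\mathring{R}_\ell|$ pointwise while $\|\rho\|_{L^\infty_tL^1_x}\lesssim\ell+\delta_{q+1}$, so that $\|a_{(\xi)}\|_{L^\infty_tL^2_x}\lesssim\delta_{q+1}^{1/2}$. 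Here $\mathbb W_{(\xi)}$ are the intermittent Beltrami flows of \cite{BV17} at frequency $\lambda$: $L^2$‑normalized divergence‑free curl‑eigenfunctions obeying $\|\nabla^N\mathbb W_{(\xi)}\|_{L^p}\lesssim\lambda^N r_\perp^{2/p-1}r_\parallel^{1/p-1/2}$, so that $\|\mathbb W_{(\xi)}\|_{L^2}\sim1$ but $\|\mathbb W_{(\xi)}\|_{L^1}\sim r_\perp r_\parallel^{1/2}$ is small. Set $w^{(p)}_{q+1}=\sum_\xi a_{(\xi)}\mathbb W_{(\xi)}$, add an incompressibility corrector $w^{(c)}_{q+1}$ and a temporal corrector $w^{(t)}_{q+1}$ cancelling the time‑resonant part of $\partial_t w^{(p)}_{q+1}$, and put $w_{q+1}=w^{(p)}_{q+1}+w^{(c)}_{q+1}+w^{(t)}_{q+1}$, $v_{q+1}=v_\ell+w_{q+1}$.

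For the perturbation estimates, the $L^2$‑normalization and the amplitude bound give $\|w^{(p)}_{q+1}\|_{L^\infty_tL^2_x}\lesssim\delta_{q+1}^{1/2}$, and, arranging the parameters so that the two correctors are of lower order, this yields \eqref{est-w_q-L^2_x}. For \eqref{est-w_q-W^1_x}, derivatives cost at most $\lambda$ and the building blocks have $L^1$‑mass $r_\perp r_\parallel^{1/2}$, so $\|w_{q+1}\|_{L^\infty_tW^{2\theta-1,1}_x}\lesssim\lambda^{2\theta-1}\,\|a_{(\xi)}\|_{C^0_x}\,r_\perp r_\parallel^{1/2}$, which is $\le\delta_{q+2}$ for $\lambda$ large provided $\gamma_\perp+\tfrac12\gamma_\parallel>2\theta-1$ (with room to absorb the $\ell^{-1}$‑losses hidden in $\|a_{(\xi)}\|_{C^0_x}$).

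Next I would compute the new Reynolds stress. Plugging $v_{q+1}$ into \eqref{eq:NS-alpha-Reynold-stress}, the geometric‑lemma identity cancels $\mathring{R}_\ell$ against the low‑frequency part of $w^{(p)}_{q+1}\otimes w^{(p)}_{q+1}$ (modulo a gradient absorbed into the pressure) and $w^{(t)}_{q+1}$ cancels the resonant part of $\partial_t w^{(p)}_{q+1}$; then $R_{q+1}$ is defined by applying the inverse‑divergence operator $\mathcal R$ of \cite{BV17} to the leftover terms — the high‑frequency oscillation error, the Nash and transport errors involving $v_\ell$, the corrector errors, the mollification commutator, and the \emph{(hyper)viscous error} $\mathcal R\bigl(\nu(-\Delta)^{\theta}w_{q+1}\bigr)$. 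All but the last are estimated exactly as in \cite{BV17} and made $\le\delta_{q+2}/2$ by taking $\lambda$ large. Since $\mathcal R$ gains one derivative, $\bigl\|\mathcal R\,\nu(-\Delta)^{\theta}w_{q+1}\bigr\|_{L^\infty_tL^1_x}\lesssim\|w_{q+1}\|_{L^\infty_tW^{2\theta-1,1}_x}$, so the last term is dominated by the quantity bounded in \eqref{est-w_q-W^1_x} — this is the crucial estimate \eqref{est-R-linear} announced in the introduction — and is likewise $\le\delta_{q+2}/2$ for $\lambda$ large. Hence \eqref{est-R-q+1} holds; and since every object introduced above is, in time, supported within $\ell\ll\delta_{q+1}$ of $\operatorname{supp}_t v_\ell\cup\operatorname{supp}_t\mathring{R}_\ell$, also \eqref{est-supp-v_q-R_q} follows.

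The main obstacle is exactly the viscous error $\mathcal R\,\nu(-\Delta)^{\theta}w_{q+1}$: it costs $2\theta$ derivatives of which $\mathcal R$ returns only one, so it is of order $\lambda^{2\theta-1}$ times the $L^1$‑mass $r_\perp r_\parallel^{1/2}$ of the building blocks (up to amplitude factors that are small powers of $\ell^{-1}$). Killing it as $\lambda\to\infty$ requires the building blocks to be as intermittent as possible, i.e.\ $\gamma_\perp+\tfrac12\gamma_\parallel$ as large as possible; but $\mathbb W_{(\xi)}$ oscillates at frequency $\lambda$ and so cannot be concentrated below scale $\lambda^{-1}$, which forces $\gamma_\perp<1$ and $\gamma_\parallel<1$, hence $\gamma_\perp+\tfrac12\gamma_\parallel<\tfrac32$. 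Combined with the requirement $\gamma_\perp+\tfrac12\gamma_\parallel>2\theta-1$ from the perturbation estimate this forces $2\theta-1<\tfrac32$, i.e.\ $\theta<5/4$; conversely, when $\theta<5/4$ one can take $\gamma_\perp,\gamma_\parallel$ close to $1$ and tune $\mu$ and $\ell$ so that this inequality together with all the remaining Buckmaster--Vicol parameter inequalities (correctors of lower order, oscillation and Nash errors small, temporal‑corrector error small) hold simultaneously. Verifying that this finite system of inequalities in the exponents is feasible — essentially a bookkeeping computation, but the heart of the matter — is the main technical point; everything else is a routine adaptation of \cite{BV17}.
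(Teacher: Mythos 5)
Your proposal follows the same Buckmaster--Vicol scheme the paper adapts, and it correctly isolates the new obstruction --- the hyperviscous error $\mathcal R\bigl(\nu(-\Delta)^{\theta}w_{q+1}\bigr)$ --- together with the exponent count that defeats it: $\mathcal R$ returns one derivative, leaving a cost of $\lambda_{q+1}^{2\theta-1}$ against an $L^1$-type mass of the intermittent Beltrami waves of order $\lambda_{q+1}^{-3/2+\varepsilon}$, which is summable precisely for $\theta<5/4$. This is the paper's crucial estimate \eqref{est-R-linear}, and your constraint $\gamma_\perp+\tfrac12\gamma_\parallel>2\theta-1$ together with $\gamma_\perp+\tfrac12\gamma_\parallel<\tfrac32$ is the same inequality as the paper's $\tfrac32\alpha>2\theta-1$, $\alpha<1$.

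One step needs repair. You assert $\|\mathcal R\,\nu(-\Delta)^{\theta}w_{q+1}\|_{L^\infty_t L^1_x}\lesssim\|w_{q+1}\|_{L^\infty_t W^{2\theta-1,1}_x}$, but $\mathcal R$ is a Calder\'on--Zygmund operator of order $-1$, and Lemma~\ref{Lemma:symm-anti-div} only gives $\|\mathcal R\mathbb P_{\neq 0}u\|_{L^p}\lesssim\||\nabla|^{-1}\mathbb P_{\neq 0}u\|_{L^p}$ for $1<p<\infty$; the $L^1$ endpoint fails. The paper instead estimates the new stress in $L^p_x$ for some fixed $p>1$ close to $1$, obtains $\||\nabla|^{2\theta-1}w_{q+1}\|_{L^p}\lesssim\lambda_{q+1}^{2\theta-1}r^{3/2-3/p}\mathcal C_3$ via \eqref{est-nabla-s-w}, and then deduces the $L^1$ bound from $L^p(\mathbb T^3)\subset L^1(\mathbb T^3)$; the parameter inequalities then carry a $3/p$ in place of $3$, and taking $p$ close enough to $1$ recovers the threshold $\theta<5/4$ unchanged. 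The other departures --- mollifying $(v_q,R_q)$, using time-localized cut-offs of scale $\mu^{-1}$, and the two-parameter $(r_\perp,r_\parallel)$ intermittent-jet bookkeeping rather than the single Dirichlet-kernel parameter $r$ with spacing $\sigma$ --- are all valid BV17 ingredients; the paper deliberately drops mollification and uses the single cut-off $\psi$ and the $(\sigma,r,\mu)$ parameterization to streamline the argument, but the estimates are interchangeable.
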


\begin{proof}[Proof of Theorem \ref{Thm:Main}]
	Assume Lemma \ref{Lemma:Iteration} is valid. Let $v_0 = u$. Then
	\begin{align*}
	\int_{\mathbb{T}^3} \ptl_t v_0(t,x)dx  = \frac{d}{dt} \int_{\mathbb{T}^3} v_0(t,x)dx \equiv 0.
	\end{align*}
	Let
	\begin{align*}
	R_0 = \mathcal{R}(\ptl_t v_0 + \nu(-  \Delta)^{\theta} v_0)  + v_0 \otimes v_0 + p_0 I , \quad p_0 = -\frac{1}{3} |v_0|^2,
	\end{align*}
	where $\mathcal{R}$ is the symmetric anti-divergence operator established in Lemma \ref{Lemma:symm-anti-div}, below.
	Clearly $(v_0,R_0)$ solves \eqref{eq:NS-alpha-Reynold-stress}.
	Set
	\begin{align*}
	\delta_1 &= \|R_0\|_{L^{\infty}_t L^1_x}, \\
	\delta_{q+1} &= 2^{-q} \varepsilon_0, \quad \text{ for } q \geq 1.
	\end{align*}
	Apply Lemma \ref{Lemma:Iteration} iteratively to obtain smooth solution $(v_q, R_q)$ to \eqref{eq:NS-alpha-Reynold-stress}. It follows from \eqref{est-w_q-L^2_x} that
	\begin{align*}
	\sum \|v_{q+1} - v_{q}\|_{L^{\infty}_t L^2_x} = \sum \|w_{q+1}\|_{L^{\infty}_t L^2_x} \leq C \sum \delta_{q+1}^{1/2} < \infty.
	\end{align*}
	Thus $v_q$ converge strongly to some $v \in C^0_t L^2_x$. Since $\|R_{q+1}\|_{L^{\infty}_t L^1_x} \to 0$, as $q \to \infty$,  $v$ is  a weak solution to the FVNSE \eqref{eq:FVNSE}. Estimate \eqref{est-w_q-W^1_x} leads to
	\begin{align*}
	\|v - v_0\|_{_{L^{\infty}_t W^{2\theta - 1,1}_x}} \leq \sum_{q=1}^{\infty} \|w_q\|_{_{L^{\infty}_t W^{2\theta - 1,1}_x}} \leq \sum_{q=1}^{\infty}\delta_{q+1} \leq \varepsilon_0.
	\end{align*}
	Furthermore, it follows from \eqref{est-supp-v_q-R_q} that
	\begin{align*}
	\operatorname{supp}_t v  &\subset \cup_{q \geq 0} \operatorname{supp}_t v_q  \subset  N_{\sum_{q \geq 0} \delta_{q+1}}(\operatorname{supp}_t u)  \subset   N_{\delta_1 + \varepsilon_0}(\operatorname{supp}_t u).
	\end{align*}
	
	Now we show the existence of infinitely many weak solutions with initial values zero. Let $u(t,x) = \varphi(t) \sum_{|k| \leq N} a_k e^{ik \cdot x}$ with $a_k \neq 0, a_k \cdot k = 0, a_{-k} = a_k^*$ for all $|k| \leq N$, and $\varphi \in C_c^{\infty}(\mathbb{R}_+)$. Thus $\nabla \cdot u = 0$ satisfies the conditions of the theorem. Hence there exists a weak solution $v$ to \eqref{eq:FVNSE} close enough to $u$ so that $v \centernot{\equiv} 0$.

\end{proof}

\section{Iteration scheme}

\subsection{Notations and Parameters}
For a complex number $\zeta \in \mathbb{C}$, we denote by $\zeta^*$ its complex conjugate. Let us normalize the volume
\begin{align*}
|\mathbb{T}^3| = 1.
\end{align*}
For smooth functions $u \in C^{\infty}(\mathbb{T}^3)$ with $\int_{\mathbb{T}^3} u(x) dx =0$ and $s \in \mathbb{R}$, we define
\begin{align*}
\mathcal{F}(|\nabla|^s u)(\xi) = |\xi|^{s}\mathcal{F}(u)(\xi), \quad \xi \in \mathbb{Z}^3.
\end{align*}
For $M, N \in [0,+\infty]$, denote the Fourier projection of $u$ by
\begin{align*}
	\mathcal{F} (\mathbb{P}_{[M,N)} u) = \begin{cases}
	u(\xi), & M \leq |\xi| < N, \xi \in \mathbb{Z}^3,\\
	0,  &\text{otherwise}.
	\end{cases}
\end{align*}
We also denote $\mathbb{P}_{\leq k} = \mathbb{P}_{[0,k)}$ and $\mathbb{P}_{\geq k} = \mathbb{P}_{[k,+\infty)}$ for $k > 0$.

Following the notation in \cite{BV17}, we introduce here several parameters $\sigma, r, \lambda$, with
\begin{align}
0 <  \sigma   < 1 < r < \lambda < \mu < \lambda^2, \quad \sigma r < 1, \label{ineq:parameters}
\end{align}
where $\lambda = \lambda_{q+1} \in  5\mathbb{N}$ is the `frequency' parameter; $\sigma$ with $1/\sigma \in \mathbb{N}$ is a small parameter such that $\lambda \sigma \in \mathbb{N}$ parameterizes the spacing between frequencies; $r \in \mathbb{N}$ denotes the number of frequencies along edges of a cube; $\mu$ measures the amount of temporal oscillation.

Later $\sigma, r, \mu$ will be chosen to be suitable powers of $\lambda_{q+1}$. We also fix a constant $p > 1$ which will be chosen later to be close to $1$. The constants implicitly in the notation `$\lesssim$' may depend on $p$ but are independent of the parameters $\sigma, r, \lambda$.

\subsection{Intermittent Beltrami flows}

We use intermittent Beltrami flows introduced in \cite{BV17} as the building blocks.
Recall some basic facts of Beltrami waves.
\begin{Proposition}\label{Prop:Beltriami-Waves}(\cite[Proposition 3.1]{BV17})
	Given $\overline{\xi} \in \mathbb{S}^2 \cap \mathbb{Q}^3$, let $A_{\overline{\xi}} \in \mathbb{S}^2 \cap \mathbb{Q}^3$ be such that
	\begin{align*}
	A_{\overline{\xi}} \cdot \overline{\xi} = 0, \quad |A_{\overline{\xi}}| = 1, \quad A_{-\overline{\xi}} = A_{\overline{\xi}}.
	\end{align*}
	Let $\Lambda$ be a given finite subset of $\mathbb{S}^2$ such that $- \Lambda = \Lambda$, and $\lambda \in \mathbb{Z}$ be such that $\lambda \Lambda \subset \mathbb{Z}^3$. Then for any choice of coefficients $a_{\overline{\xi}} \in \mathbb{C}$ with $a_{\overline{\xi}}^* = a_{-\overline{\xi}}$ the vector field
	\begin{align*}
	W(x) = \sum_{\overline{\xi} \in \Lambda} a_{\overline{\xi}} B_{\overline{\xi}} e^{i \lambda \overline{\xi} \cdot x}, \quad \text{ with } B_{\overline{\xi}} = \frac{1}{\sqrt{2}}(A_{\overline{\xi}} + i \overline{\xi} \times A_{\overline{\xi}}),
	\end{align*}
	is real-valued, divergence-free and satisfies
	\begin{align*}
	\nabla \times W = \lambda W, \quad
	\nabla \cdot (W \otimes W) = \nabla \frac{|W|^2}{2}.
	\end{align*}
	Furthermore,
	\begin{align*}
	\langle W \otimes W \rangle := \fint_{ \mathbb{T}^3} W \otimes W dx= \sum_{\overline{\xi} \in \Lambda} \frac{1}{2}|a_{(\overline{\xi})}|^2(\mathrm{Id} - \overline{\xi} \otimes \overline{\xi}).
	\end{align*}
\end{Proposition}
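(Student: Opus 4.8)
The plan is to verify each assertion by direct computation on the individual Fourier modes $W_{\overline{\xi}}(x) := a_{\overline{\xi}} B_{\overline{\xi}} e^{i\lambda\overline{\xi}\cdot x}$ and then to sum over $\Lambda$, exploiting the symmetry $-\Lambda = \Lambda$ together with the reality conditions $a_{\overline{\xi}}^* = a_{-\overline{\xi}}$ and $A_{-\overline{\xi}} = A_{\overline{\xi}}$. First I would record the basic algebra of $B_{\overline{\xi}}$: since $A_{\overline{\xi}}, \overline{\xi}$ are real with $|\overline{\xi}| = |A_{\overline{\xi}}| = 1$ and $\overline{\xi}\cdot A_{\overline{\xi}} = 0$, the triple $\{A_{\overline{\xi}},\, \overline{\xi}\times A_{\overline{\xi}},\, \overline{\xi}\}$ is an orthonormal basis of $\mathbb{R}^3$. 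From $\overline{\xi}\cdot A_{\overline{\xi}} = 0$ one gets $\overline{\xi}\cdot B_{\overline{\xi}} = 0$, hence $\nabla\cdot W_{\overline{\xi}} = i\lambda(\overline{\xi}\cdot B_{\overline{\xi}})W_{\overline{\xi}} = 0$; and from the identity $\overline{\xi}\times(\overline{\xi}\times A_{\overline{\xi}}) = \overline{\xi}(\overline{\xi}\cdot A_{\overline{\xi}}) - |\overline{\xi}|^2 A_{\overline{\xi}} = -A_{\overline{\xi}}$ one computes $\overline{\xi}\times B_{\overline{\xi}} = -i B_{\overline{\xi}}$, i.e. $i\overline{\xi}\times B_{\overline{\xi}} = B_{\overline{\xi}}$, so that $\nabla\times W_{\overline{\xi}} = i\lambda\,\overline{\xi}\times W_{\overline{\xi}} = \lambda W_{\overline{\xi}}$. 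Summing over $\overline{\xi}\in\Lambda$ gives $\nabla\cdot W = 0$ and $\nabla\times W = \lambda W$.

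For reality, I would observe $B_{-\overline{\xi}} = B_{\overline{\xi}}^*$: indeed $B_{-\overline{\xi}} = \tfrac{1}{\sqrt{2}}\big(A_{-\overline{\xi}} + i(-\overline{\xi})\times A_{-\overline{\xi}}\big) = \tfrac{1}{\sqrt{2}}\big(A_{\overline{\xi}} - i\,\overline{\xi}\times A_{\overline{\xi}}\big) = B_{\overline{\xi}}^*$, using $A_{-\overline{\xi}} = A_{\overline{\xi}}$ and that $A_{\overline{\xi}}, \overline{\xi}$ are real. Hence $\overline{W_{\overline{\xi}}(x)} = a_{\overline{\xi}}^* B_{\overline{\xi}}^* e^{-i\lambda\overline{\xi}\cdot x} = a_{-\overline{\xi}} B_{-\overline{\xi}} e^{i\lambda(-\overline{\xi})\cdot x} = W_{-\overline{\xi}}(x)$, and since $\Lambda$ is symmetric, $\overline{W} = \sum_{\overline{\xi}\in\Lambda} W_{-\overline{\xi}} = W$. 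The quadratic identity then follows from the vector calculus identities $\nabla\cdot(W\otimes W) = (W\cdot\nabla)W + (\nabla\cdot W)W$ and $(W\cdot\nabla)W = \nabla\tfrac{|W|^2}{2} - W\times(\nabla\times W)$: the first reduces to $(W\cdot\nabla)W$ since $W$ is divergence-free, and $W\times(\nabla\times W) = \lambda\, W\times W = 0$ by the Beltrami property just established.

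Finally, for the average I would expand $W\otimes W = \sum_{\overline{\xi},\overline{\eta}} a_{\overline{\xi}} a_{\overline{\eta}}\, B_{\overline{\xi}}\otimes B_{\overline{\eta}}\, e^{i\lambda(\overline{\xi}+\overline{\eta})\cdot x}$; since $\lambda(\overline{\xi}+\overline{\eta}) \in \mathbb{Z}^3$, averaging over $\mathbb{T}^3$ kills every term except those with $\overline{\eta} = -\overline{\xi}$, so $\langle W\otimes W\rangle = \sum_{\overline{\xi}\in\Lambda} a_{\overline{\xi}} a_{-\overline{\xi}}\, B_{\overline{\xi}}\otimes B_{-\overline{\xi}} = \sum_{\overline{\xi}\in\Lambda} |a_{\overline{\xi}}|^2\, B_{\overline{\xi}}\otimes B_{\overline{\xi}}^*$, using $a_{-\overline{\xi}} = a_{\overline{\xi}}^*$ and $B_{-\overline{\xi}} = B_{\overline{\xi}}^*$. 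Writing $C_{\overline{\xi}} := \overline{\xi}\times A_{\overline{\xi}}$ one has $B_{\overline{\xi}}\otimes B_{\overline{\xi}}^* = \tfrac12\big(A_{\overline{\xi}}\otimes A_{\overline{\xi}} + C_{\overline{\xi}}\otimes C_{\overline{\xi}}\big) + \tfrac{i}{2}\big(C_{\overline{\xi}}\otimes A_{\overline{\xi}} - A_{\overline{\xi}}\otimes C_{\overline{\xi}}\big)$. The orthonormal-frame relation gives $A_{\overline{\xi}}\otimes A_{\overline{\xi}} + C_{\overline{\xi}}\otimes C_{\overline{\xi}} = \mathrm{Id} - \overline{\xi}\otimes\overline{\xi}$, while the imaginary antisymmetric term is odd under $\overline{\xi}\mapsto -\overline{\xi}$ (since $A_{-\overline{\xi}} = A_{\overline{\xi}}$ but $C_{-\overline{\xi}} = -C_{\overline{\xi}}$), and $|a_{-\overline{\xi}}|^2 = |a_{\overline{\xi}}|^2$, so summing over the symmetric set $\Lambda$ cancels it. This produces the stated formula $\langle W\otimes W\rangle = \sum_{\overline{\xi}\in\Lambda}\tfrac12 |a_{\overline{\xi}}|^2(\mathrm{Id} - \overline{\xi}\otimes\overline{\xi})$.

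The argument is essentially bookkeeping; the only points requiring care are tracking the conjugation/parity symmetries so that $W$ comes out real-valued and the imaginary cross terms in $\langle W\otimes W\rangle$ drop out — both hinge on the pairing of $\overline{\xi}$ with $-\overline{\xi}$ in $\Lambda$ and on the identity $B_{-\overline{\xi}} = B_{\overline{\xi}}^*$.
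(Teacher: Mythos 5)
Your proof is correct and complete. The paper does not reprove this proposition but simply cites it as \cite[Proposition~3.1]{BV17}; your direct verification (computing $\overline{\xi}\cdot B_{\overline{\xi}}=0$, $i\overline{\xi}\times B_{\overline{\xi}}=B_{\overline{\xi}}$, $B_{-\overline{\xi}}=B_{\overline{\xi}}^*$, deducing the Beltrami and reality properties, reducing $\nabla\cdot(W\otimes W)$ via $\nabla\tfrac{|W|^2}{2}-W\times(\nabla\times W)$, and isolating the $\overline{\eta}=-\overline{\xi}$ diagonal in the Fourier expansion of $\langle W\otimes W\rangle$ together with the orthonormal-frame identity $A_{\overline{\xi}}\otimes A_{\overline{\xi}}+C_{\overline{\xi}}\otimes C_{\overline{\xi}}=\mathrm{Id}-\overline{\xi}\otimes\overline{\xi}$ and the parity cancellation of the antisymmetric imaginary part) is precisely the standard argument given in the cited reference, so the approaches coincide.
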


Let $\Lambda, \Lambda^+, \Lambda^- \subset \mathbb{S}^2 \cap \mathbb{Q}^3$ be defined by
\begin{align*}
\Lambda^+ = \{ \frac{1}{5}(3e_1 \pm 4e_2), \frac{1}{5}(3e_2 \pm 4e_3), \frac{1}{5}(3e_3 \pm 4e_1) \},\\
\quad \Lambda^- = -\Lambda^+, \quad
\Lambda = \Lambda^+ \cup \Lambda^-.
\end{align*}
Clearly we have
\begin{align}
5\Lambda \in \mathbb{Z}^3, \quad \text{ and } \quad \min_{\overline{\xi}', \overline{\xi} \in \Lambda, \overline{\xi}'+ \overline{\xi}\neq 0} |\overline{\xi}'+ \overline{\xi}| \geq \frac{1}{5}. \label{est-xi+xi'}
\end{align}
Also it is direct to check that
\begin{align*}
\frac{1}{8}\sum_{\overline{\xi} \in \Lambda}(\mathrm{Id} - \overline{\xi} \otimes \overline{\xi}) = \mathrm{Id}.
\end{align*}
In fact, representations of this form exist for symmetric matrices close to the identity. We have the following simple variant of \cite[Proposition 3.2]{BV17}.
\begin{Proposition}\label{Prop:convex-representation}
	Let $B_{\varepsilon}(\mathrm{Id})$ denote the ball of symmetric matrices, centered at the identity, of radius $\varepsilon$. Then
	there exist a constant $\varepsilon_{\gamma} > 0$
	and smooth positive functions $\gamma_{(\overline{\xi})} \in C^{\infty}(B_{\varepsilon_{\gamma}}(\mathrm{Id}))$,
	such that
	\begin{enumerate}
		\item $\gamma_{(\overline{\xi})} = \gamma_{(-\overline{\xi})}$;
		\item for each $R \in B_{\varepsilon_{\gamma}}(\mathrm{Id})$ we have the identity
		\begin{align*}
		R = \frac{1}{2}\sum_{\overline{\xi} \in \Lambda} \left(\gamma_{(\overline{\xi})}(R)\right)^2(\mathrm{Id} - \overline{\xi} \otimes \overline{\xi}).
		\end{align*}
	\end{enumerate}
\end{Proposition}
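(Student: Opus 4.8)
The plan is to reduce the statement to its known analogue \cite[Proposition 3.2]{BV17} by an elementary perturbation/rescaling argument, since the only difference is that here the reference measure is the set $\Lambda$ above together with the normalization $\tfrac18\sum_{\overline\xi\in\Lambda}(\mathrm{Id}-\overline\xi\otimes\overline\xi)=\mathrm{Id}$, whereas in \cite{BV17} a slightly different geometric configuration is used. First I would record that the six directions $\overline\xi\in\Lambda^+$ (and their antipodes) are pairwise non-parallel, so the six symmetric matrices $N_{\overline\xi}:=\mathrm{Id}-\overline\xi\otimes\overline\xi$ span the $6$-dimensional space $\mathrm{Sym}_{3\times3}$; hence the linear map $\mathbb{R}^6\ni(\rho_{\overline\xi})\mapsto \tfrac12\sum_{\overline\xi\in\Lambda^+}\rho_{\overline\xi}N_{\overline\xi}\in\mathrm{Sym}_{3\times3}$ is a linear isomorphism once we impose $\rho_{\overline\xi}=\rho_{-\overline\xi}$ (so that effectively we have one parameter per unordered pair $\{\overline\xi,-\overline\xi\}$, six of them, matching $\dim\mathrm{Sym}_{3\times3}=6$). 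Checking this spanning property is a finite linear-algebra verification: compute the $6\times6$ Gram-type matrix of the $N_{\overline\xi}$ against a basis of $\mathrm{Sym}_{3\times3}$ and note its determinant is nonzero; alternatively observe that by the symmetry group of $\Lambda$ (the rotations cyclically permuting $e_1,e_2,e_3$), the image is an invariant subspace containing a non-scalar matrix, hence is all of $\mathrm{Sym}$.

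Given the isomorphism, let $L:\mathrm{Sym}_{3\times3}\to\mathbb{R}^6$ be its inverse, so for $R$ near $\mathrm{Id}$ we get unique coefficients $\rho_{\overline\xi}(R)=(L R)_{\overline\xi}$ depending linearly (hence smoothly) on $R$, with $\tfrac12\sum\rho_{\overline\xi}(R)N_{\overline\xi}=R$ and $\rho_{\overline\xi}=\rho_{-\overline\xi}$. At $R=\mathrm{Id}$ the identity $\tfrac18\sum_{\overline\xi\in\Lambda}N_{\overline\xi}=\mathrm{Id}$, i.e. $\tfrac12\sum_{\overline\xi\in\Lambda^+}\tfrac12 N_{\overline\xi}=\mathrm{Id}$, shows $\rho_{\overline\xi}(\mathrm{Id})=\tfrac12>0$ for every $\overline\xi$ (uniqueness of the representation forces this value). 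By continuity of the finitely many functions $\rho_{\overline\xi}$, there is a radius $\varepsilon_\gamma>0$ such that $\rho_{\overline\xi}(R)>1/4$ for all $\overline\xi\in\Lambda$ and all $R\in B_{\varepsilon_\gamma}(\mathrm{Id})$. On that ball define $\gamma_{(\overline\xi)}(R):=\sqrt{\rho_{\overline\xi}(R)}$, which is smooth (square root of a smooth strictly positive function), strictly positive, satisfies $\gamma_{(\overline\xi)}=\gamma_{(-\overline\xi)}$ because $\rho_{\overline\xi}=\rho_{-\overline\xi}$, and obeys
\[
R=\frac12\sum_{\overline\xi\in\Lambda}\bigl(\gamma_{(\overline\xi)}(R)\bigr)^2 N_{\overline\xi}
=\frac12\sum_{\overline\xi\in\Lambda}\bigl(\gamma_{(\overline\xi)}(R)\bigr)^2(\mathrm{Id}-\overline\xi\otimes\overline\xi),
\]
which is exactly claim (2); here I used that summing over $\Lambda=\Lambda^+\cup\Lambda^-$ with $\gamma_{(\overline\xi)}=\gamma_{(-\overline\xi)}$ doubles the $\Lambda^+$-sum, which is compensated by the factor $\tfrac12$ versus the normalization constant $\tfrac18$ — I will keep careful track of these constants so the coefficient bookkeeping matches the stated identity.

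The main (and really the only) obstacle is the explicit finite computation verifying that $\{N_{\overline\xi}:\overline\xi\in\Lambda^+\}$ is a basis of $\mathrm{Sym}_{3\times3}$ and that the resulting coefficients at $\mathrm{Id}$ are all equal to $\tfrac12$; everything else (smoothness, positivity on a small ball, the symmetry $\gamma_{(\overline\xi)}=\gamma_{(-\overline\xi)}$, taking square roots) is soft. Since \cite[Proposition 3.2]{BV17} proves the analogous statement for their configuration by precisely this inverse-function-theorem-free linear argument, I would simply remark that the proof is a verbatim adaptation, carrying out the one $6\times6$ determinant check for the present $\Lambda$, and omit further details.
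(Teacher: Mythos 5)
The paper itself gives no proof of this Proposition; it simply states it as a ``simple variant of \cite[Proposition 3.2]{BV17}'' and delegates the argument to that reference. Your proposal supplies the expected elementary linear-algebra proof, and the structure is exactly right: verify that the six matrices $N_{\overline\xi}=\mathrm{Id}-\overline\xi\otimes\overline\xi$, $\overline\xi\in\Lambda^+$, form a basis of the $6$-dimensional space of symmetric $3\times3$ matrices, use the normalization $\tfrac18\sum_{\Lambda}N_{\overline\xi}=\mathrm{Id}$ to see that the coordinates of $\mathrm{Id}$ in that basis are strictly positive, then propagate positivity to a small ball by continuity and take square roots to get the $\gamma_{(\overline\xi)}$.

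There is, however, a factor-of-two slip in exactly the bookkeeping you say you will ``keep careful track of.'' You define the linear isomorphism $(\rho_{\overline\xi})\mapsto\tfrac12\sum_{\overline\xi\in\Lambda^+}\rho_{\overline\xi}N_{\overline\xi}$, so that its inverse yields coefficients with $\tfrac12\sum_{\Lambda^+}\rho_{\overline\xi}(R)N_{\overline\xi}=R$ and $\rho_{\overline\xi}(\mathrm{Id})=\tfrac12$, and you set $\gamma_{(\overline\xi)}=\sqrt{\rho_{\overline\xi}}$ extended evenly. But then, since $N_{\overline\xi}=N_{-\overline\xi}$ and $\rho_{\overline\xi}=\rho_{-\overline\xi}$,
\begin{align*}
\frac12\sum_{\overline\xi\in\Lambda}\bigl(\gamma_{(\overline\xi)}(R)\bigr)^2N_{\overline\xi}
=\frac12\cdot2\sum_{\overline\xi\in\Lambda^+}\rho_{\overline\xi}(R)N_{\overline\xi}
=\sum_{\overline\xi\in\Lambda^+}\rho_{\overline\xi}(R)N_{\overline\xi}=2R,
\end{align*}
not $R$. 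The fix is harmless: either drop the prefactor $\tfrac12$ in the definition of the isomorphism (giving $\rho_{\overline\xi}(\mathrm{Id})=\tfrac14$ and making the final identity come out to $R$), or keep your isomorphism and set $\gamma_{(\overline\xi)}=\sqrt{\rho_{\overline\xi}/2}$. A smaller caveat: your alternative justification of the spanning property via the symmetry group is not quite sufficient as stated, since $\Lambda$ is invariant only under the cyclic group generated by $e_1\mapsto e_2\mapsto e_3\mapsto e_1$ together with $\pm\mathrm{Id}$, and under that action $\mathrm{Sym}_{3\times3}$ has proper invariant subspaces containing non-scalar matrices (e.g.\ the span of the all-ones matrix); the direct $6\times6$ determinant check you also mention is the safe route, and it does succeed for this $\Lambda$.
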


Define the Dirichlet kernel
\begin{align*}
D_r(x) &= \frac{1}{(2r+1)^{3/2}} \sum_{\xi \in \Omega_r} e^{i \xi \cdot x}, \quad
\Omega_r = \{(j,k,l): j,k,l \in \{-r,\cdots,r\} \}.
\end{align*}
It has the property that, for $1 < p \leq \infty$,
\begin{align*}
\|D_r\|_{L^p} \lesssim r^{3/2 - 3/p}, \quad \|D_r\|_{L^2} = (2 \pi)^3.
\end{align*}

Following \cite{BV17}, for $\overline{\xi} \in \Lambda^+$, define a directed and rescaled Dirichlet kernel by
\begin{align}
\eta_{(\overline{\xi})}(t,x) = \eta_{\overline{\xi}, \lambda, \sigma, r, \mu}(t,x) = D_r(\lambda \sigma (\overline{\xi} \cdot x + \mu t, A_{\overline{\xi}} \cdot x, (\overline{\xi} \times A_{\overline{\xi}}) \cdot x)),
\end{align}
and for $\overline{\xi} \in \Lambda^-$, define
\begin{align*}
\eta_{(\overline{\xi})}(t,x) = \eta_{-(\overline{\xi})}(t,x).
\end{align*}
Note the important identity
\begin{align}
\frac{1}{\mu} \ptl_t \eta_{(\overline{\xi})}(t,x) = \pm (\overline{\xi} \cdot \nabla) \eta_{(\overline{\xi})}(t,x), \quad \overline{\xi} \in  \Lambda^{\pm}. \label{eq:Dt-eta}
\end{align}
Since the map $x \mapsto \lambda \sigma (\overline{\xi} \cdot x + \mu t, A_{\overline{\xi}} \cdot x, (\overline{\xi} \times A_{\overline{\xi}}) \cdot x)$ is the composition of a rotation by a rational orthogonal matrix mapping $\{e_1, e_2, e_3\}$ to $\{ \overline{\xi}, A_{\overline{\xi}}, \overline{\xi} \times A_{\overline{\xi}} \}$, a translation, and a rescaling by integers,  for $1 < p \leq \infty$, we have
\begin{align*}
\fint_{ \mathbb{T}^3} \eta_{(\overline{\xi})}(t,x)^2(t,x) dx = 1, \quad \|\eta_{(\overline{\xi})}\|_{L^{\infty}_t L^p_x(\mathbb{T}^3)} \lesssim r^{3/2 - 3/p}.
\end{align*}

Let $W_{(\overline{\xi})}$ be the Beltrami plane wave at frequency $\lambda$,
\begin{align*}
W_{(\overline{\xi})} = W_{\overline{\xi}, \lambda}(x) = B_{\overline{\xi}} e^{i \lambda \overline{\xi} \cdot x}.
\end{align*}
Define the intermittent Beltrami wave $\mathbb{W}_{(\overline{\xi})}$ as
\begin{align}
\mathbb{W}_{(\overline{\xi})}(t,x) := \mathbb{W}_{\overline{\xi},\lambda,\sigma,r,\mu}(t,x) = \eta_{(\overline{\xi})}(t,x)W_{(\overline{\xi})}(x).
\end{align}
It follows from the definitions and \eqref{est-xi+xi'} that
\begin{align}
\mathbb{P}_{[\frac{\lambda}{2}, 2 \lambda)} \mathbb{W}_{(\overline{\xi})} &= \mathbb{W}_{(\overline{\xi})}, \label{est-supp-frequency-W}\\
\mathbb{P}_{[\frac{\lambda}{5}, 4 \lambda)} (\mathbb{W}_{(\overline{\xi})} \otimes \mathbb{W}_{(\overline{\xi}')}) &= \mathbb{W}_{(\overline{\xi})} \otimes \mathbb{W}_{(\overline{\xi}')}, \quad \overline{\xi}' \neq -\overline{\xi} \label{est-supp-frequency-W2}.
\end{align}
The following properties are immediate from the definitions.
\begin{Proposition}(\cite[Proposition 3.4]{BV17}) \label{Prop:Property-W}
	Let $a_{\overline{\xi}} \in \mathbb{C}$ be constants with $a_{\overline{\xi}}^* = a_{-\overline{\xi}}$.  Let
	\begin{align*}
	W(x) =  \sum_{\overline{\xi} \in \Lambda} a_{\overline{\xi}} \mathbb{W}_{(\overline{\xi})}(x).
	\end{align*}
	Then $W(x)$ is real valued. Moreover, for each $R \in B_{\varepsilon_{\gamma}}(\mathrm{Id})$ we have
	\begin{align*}
	\sum_{\overline{\xi} \in \Lambda}  \left(\gamma_{(\overline{\xi})}(R)\right)^2 \fint_{ \mathbb{T}^3} \mathbb{W}_{(\overline{\xi})} \otimes \mathbb{W}_{(-\overline{\xi})} = \sum_{\overline{\xi} \in \Lambda}  \left(\gamma_{(\overline{\xi})}(R)\right)^2 B_{\overline{\xi}} \otimes B_{-\overline{\xi}} = R.
	\end{align*}
\end{Proposition}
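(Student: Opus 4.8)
The plan is to verify the two assertions directly from the definitions, following the proof of \cite[Proposition 3.4]{BV17} essentially verbatim; the only new ingredient is that one must carry along the real, time-dependent amplitude $\eta_{(\overline{\xi})}$ and check that it does not disturb any of the symmetries.

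First I would establish that $W$ is real valued. I would record the two elementary symmetries that will be used repeatedly: $\eta_{(-\overline{\xi})} = \eta_{(\overline{\xi})}$, which is immediate from the definition of $\eta_{(\overline{\xi})}$ on $\Lambda^-$, and the fact that each $\eta_{(\overline{\xi})}$ is real valued, since the Dirichlet kernel $D_r$ is real ($\Omega_r$ being symmetric about the origin). From $A_{-\overline{\xi}} = A_{\overline{\xi}}$ one gets $B_{-\overline{\xi}} = \tfrac{1}{\sqrt{2}}(A_{\overline{\xi}} - i\,\overline{\xi}\times A_{\overline{\xi}}) = B_{\overline{\xi}}^*$, hence $\overline{W_{(\overline{\xi})}(x)} = B_{\overline{\xi}}^* e^{-i\lambda\overline{\xi}\cdot x} = W_{(-\overline{\xi})}(x)$ and therefore $\overline{\mathbb{W}_{(\overline{\xi})}} = \mathbb{W}_{(-\overline{\xi})}$. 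Conjugating the sum defining $W$, using the hypothesis $a_{\overline{\xi}}^* = a_{-\overline{\xi}}$, and relabelling $\overline{\xi}\mapsto -\overline{\xi}$ (legitimate since $-\Lambda = \Lambda$), one obtains $\overline{W} = W$.

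Next I would handle the tensor identity. Because $\eta_{(-\overline{\xi})} = \eta_{(\overline{\xi})}$ is real and $e^{i\lambda\overline{\xi}\cdot x}e^{-i\lambda\overline{\xi}\cdot x} = 1$, one has the pointwise identity $\mathbb{W}_{(\overline{\xi})}\otimes\mathbb{W}_{(-\overline{\xi})} = \eta_{(\overline{\xi})}^2\, B_{\overline{\xi}}\otimes B_{-\overline{\xi}}$; integrating and using the normalization $\fint_{\mathbb{T}^3}\eta_{(\overline{\xi})}^2\,dx = 1$ yields the first equality $\fint_{\mathbb{T}^3}\mathbb{W}_{(\overline{\xi})}\otimes\mathbb{W}_{(-\overline{\xi})} = B_{\overline{\xi}}\otimes B_{-\overline{\xi}}$. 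For the second equality I would expand $B_{\overline{\xi}}\otimes B_{-\overline{\xi}} = B_{\overline{\xi}}\otimes B_{\overline{\xi}}^*$: since $\{\overline{\xi}, A_{\overline{\xi}}, \overline{\xi}\times A_{\overline{\xi}}\}$ is an orthonormal frame, its real part equals $\tfrac{1}{2}(\mathrm{Id} - \overline{\xi}\otimes\overline{\xi})$, while its imaginary part $\tfrac{1}{2}\big[(\overline{\xi}\times A_{\overline{\xi}})\otimes A_{\overline{\xi}} - A_{\overline{\xi}}\otimes(\overline{\xi}\times A_{\overline{\xi}})\big]$ is odd under $\overline{\xi}\mapsto -\overline{\xi}$ (because $A_{-\overline{\xi}} = A_{\overline{\xi}}$ but $(-\overline{\xi})\times A_{-\overline{\xi}} = -\,\overline{\xi}\times A_{\overline{\xi}}$), whereas $\overline{\xi}\otimes\overline{\xi}$ is even. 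Since $\gamma_{(\overline{\xi})} = \gamma_{(-\overline{\xi})}$ by Proposition \ref{Prop:convex-representation}(1), summing $\big(\gamma_{(\overline{\xi})}(R)\big)^2 B_{\overline{\xi}}\otimes B_{-\overline{\xi}}$ over $\overline{\xi}\in\Lambda$ annihilates the antisymmetric contribution and leaves $\tfrac{1}{2}\sum_{\overline{\xi}\in\Lambda}\big(\gamma_{(\overline{\xi})}(R)\big)^2(\mathrm{Id} - \overline{\xi}\otimes\overline{\xi})$, which is exactly $R$ by Proposition \ref{Prop:convex-representation}(2).

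There is no genuine obstacle here; the proposition is a bookkeeping exercise in the involution $\overline{\xi}\mapsto -\overline{\xi}$. The only point deserving a moment's care, and the sole departure from \cite{BV17}, is that $\mathbb{W}_{(\overline{\xi})}$ now carries the time-dependent factor $\eta_{(\overline{\xi})}$; one must verify that $\eta_{(\overline{\xi})}$ is real, that $\eta_{(-\overline{\xi})} = \eta_{(\overline{\xi})}$, and that $\fint_{\mathbb{T}^3}\eta_{(\overline{\xi})}^2 = 1$, so that the products $\mathbb{W}_{(\overline{\xi})}\otimes\mathbb{W}_{(-\overline{\xi})}$ collapse to $\eta_{(\overline{\xi})}^2 B_{\overline{\xi}}\otimes B_{-\overline{\xi}}$ and reduce, after averaging, to the purely algebraic statement handled above.
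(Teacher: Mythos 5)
Your verification is correct, and it is exactly the routine check the paper has in mind: the paper offers no written proof, stating that these properties are ``immediate from the definitions'' and citing \cite[Proposition 3.4]{BV17}, whose argument is the same conjugation/evenness bookkeeping you carry out (including the only new point, that $\eta_{(\overline{\xi})}$ is real, even in $\overline{\xi}$, and has unit $L^2$ average, so that $\fint \mathbb{W}_{(\overline{\xi})}\otimes\mathbb{W}_{(-\overline{\xi})}=B_{\overline{\xi}}\otimes B_{-\overline{\xi}}$, after which Proposition \ref{Prop:convex-representation} finishes the identity). No gaps.
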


\begin{Proposition}(\cite[Proposition 3.5]{BV17})
	For any $1 < p \leq \infty, N \geq 0, K \geq 0$:
	\begin{align}
	\| \nabla^N \ptl_t^K \mathbb{W}_{(\overline{\xi})} \|_{L^{\infty}_t L^p_x} &\lesssim \lambda^N (\lambda \sigma r \mu)^K r^{3/2 - 3/p}, \label{est-W-xi-L^p}
	\\
	\| \nabla^N \ptl_t^K \eta_{(\overline{\xi})} \|_{L^{\infty}_t L^p_x} &\lesssim (\lambda \sigma r)^N (\lambda \sigma r \mu)^K r^{3/2 - 3/p}. \label{est-eta-xi-L^p}
	\end{align}
\end{Proposition}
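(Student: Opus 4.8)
The plan is to deduce both estimates from two properties of the rescaled Dirichlet kernel recorded above: that $\eta_{(\overline{\xi})}(t,x)=D_r(\Phi_{\overline{\xi}}(t,x))$, where for fixed $t$ the map $x\mapsto\Phi_{\overline{\xi}}(t,x)=\lambda\sigma(\overline{\xi}\cdot x+\mu t,\,A_{\overline{\xi}}\cdot x,\,(\overline{\xi}\times A_{\overline{\xi}})\cdot x)$ is affine with linear part (in $x$) equal to $\lambda\sigma$ times a rational rotation, and that $\|\eta_{(\overline{\xi})}\|_{L^{\infty}_t L^p_x}\lesssim r^{3/2-3/p}$ for $1<p\le\infty$. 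By the symmetry $\eta_{(\overline{\xi})}=\eta_{-(\overline{\xi})}$ it suffices to treat $\overline{\xi}\in\Lambda^+$. Since $D_r$ is a trigonometric polynomial with frequencies in $\Omega_r\subset\{|\xi|\lesssim r\}$, the function $\eta_{(\overline{\xi})}(t,\cdot)$ and all its spatial derivatives are trigonometric polynomials with frequencies in a $t$-independent set of diameter $\lesssim\lambda\sigma r$; and the same change-of-variables computation on $\mathbb{T}^3$ behind the bound on $\|\eta_{(\overline{\xi})}\|_{L^{\infty}_t L^p_x}$ also gives $\|\nabla^m D_r\|_{L^p}\lesssim r^m\,r^{3/2-3/p}$.

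For \eqref{est-eta-xi-L^p} I would first estimate pure spatial derivatives by the Bernstein inequality (valid on $\mathbb{T}^3$ for $1<p\le\infty$): since $\eta_{(\overline{\xi})}(t,\cdot)$ has frequencies in a ball of radius $\lesssim\lambda\sigma r$,
\[
\|\nabla^N\eta_{(\overline{\xi})}\|_{L^{\infty}_t L^p_x}\lesssim(\lambda\sigma r)^N\,\|\eta_{(\overline{\xi})}\|_{L^{\infty}_t L^p_x}\lesssim(\lambda\sigma r)^N r^{3/2-3/p},
\]
equivalently by differentiating $D_r\circ\Phi_{\overline{\xi}}$ directly (each $\nabla$ produces a factor $\lambda\sigma$ times a first derivative of $D_r$ at $\Phi_{\overline{\xi}}$) and invoking the bound on $\|\nabla^m D_r\|_{L^p}$ just noted. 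Time derivatives are then reduced to spatial ones by the transport identity \eqref{eq:Dt-eta}: iterating $\tfrac1\mu\ptl_t\eta_{(\overline{\xi})}=\pm(\overline{\xi}\cdot\nabla)\eta_{(\overline{\xi})}$ gives $\ptl_t^K\eta_{(\overline{\xi})}=(\pm\mu)^K(\overline{\xi}\cdot\nabla)^K\eta_{(\overline{\xi})}$, so $\nabla^N\ptl_t^K\eta_{(\overline{\xi})}$ is $\mu^K$ times a spatial derivative of $\eta_{(\overline{\xi})}$ of order $N+K$ (using $|\overline{\xi}|=1$), and the previous bound yields $\|\nabla^N\ptl_t^K\eta_{(\overline{\xi})}\|_{L^{\infty}_t L^p_x}\lesssim\mu^K(\lambda\sigma r)^{N+K}r^{3/2-3/p}$, which is \eqref{est-eta-xi-L^p}.

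For \eqref{est-W-xi-L^p} I would write $\mathbb{W}_{(\overline{\xi})}=\eta_{(\overline{\xi})}W_{(\overline{\xi})}$ with $W_{(\overline{\xi})}(x)=B_{\overline{\xi}}e^{i\lambda\overline{\xi}\cdot x}$. This factor is independent of $t$ and satisfies $|\nabla^jW_{(\overline{\xi})}|\lesssim\lambda^j$ pointwise, since $|B_{\overline{\xi}}|=|\overline{\xi}|=1$. Hence the Leibniz rule expresses $\nabla^N\ptl_t^K\mathbb{W}_{(\overline{\xi})}$ as a finite linear combination of products $(\nabla^jW_{(\overline{\xi})})(\nabla^{N-j}\ptl_t^K\eta_{(\overline{\xi})})$, $0\le j\le N$, with all time derivatives falling on $\eta_{(\overline{\xi})}$; by \eqref{est-eta-xi-L^p} each summand has $L^{\infty}_t L^p_x$ norm $\lesssim\lambda^j(\lambda\sigma r)^{N-j}(\lambda\sigma r\mu)^K r^{3/2-3/p}$. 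Since $\sigma r<1$ by \eqref{ineq:parameters}, we have $\lambda\sigma r\le\lambda$, so $\lambda^j(\lambda\sigma r)^{N-j}\le\lambda^N$ for every admissible $j$, and summing gives \eqref{est-W-xi-L^p}.

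I do not expect a genuine obstacle here: the argument is bookkeeping built on the Propositions already stated. The only point needing a little care is the uniformity in $t$ of the frequency localization of $\eta_{(\overline{\xi})}$ together with the $L^p(\mathbb{T}^3)$-invariance under the rational-rotation-plus-integer-dilation pullback — exactly the mechanism behind the normalization $\fint_{\mathbb{T}^3}\eta_{(\overline{\xi})}^2\,dx=1$ — and keeping track of the inequality $\sigma r<1$, which is what collapses $(\lambda\sigma r)^{N-j}\lambda^j$ to $\lambda^N$ and thereby produces the sharper factor $\lambda^N$ (rather than $(\lambda\sigma r)^N$) in \eqref{est-W-xi-L^p}. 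For non-integer $N,K$ one replaces the differentiations by the corresponding Fourier multipliers, leaving the Bernstein and transport estimates unchanged.
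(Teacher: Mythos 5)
Your proof is correct. The paper cites this Proposition directly from \cite[Proposition 3.5]{BV17} without supplying a proof, so there is no in-paper argument to compare against; your derivation --- Bernstein for $D_r$, the $L^p$-preserving affine change of variables underlying the normalization of $\eta_{(\overline{\xi})}$, the transport identity \eqref{eq:Dt-eta} to convert each $\partial_t$ into $\mu(\overline{\xi}\cdot\nabla)$, and the Leibniz rule for $\mathbb{W}_{(\overline{\xi})} = \eta_{(\overline{\xi})}W_{(\overline{\xi})}$ with the absorption $\lambda^j(\lambda\sigma r)^{N-j}\le\lambda^N$ coming from $\sigma r<1$ --- is precisely the standard argument used in \cite{BV17}.
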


\subsection{Perturbations}
Let $\psi(t)$ be a smooth cut-off function such that
\begin{align}
\psi(t) = 1 \text{ on } \operatorname{supp}_t R_q,\quad  \supp \psi(t) \subset N_{\delta_{q+1}}(\operatorname{supp}_t R_q), \quad |\psi'(t)| \leq 2 \delta_{q+1}^{-1}. \label{eta_0}
\end{align}
Take a smooth increasing function $\chi$	such that
\begin{align*}
\chi(s) = \begin{cases}
1, & 0 \leq s < 1\\
s, & s \geq 2
\end{cases},
\end{align*}
and set
\begin{align*}
\rho(t,x) = \varepsilon_{\gamma}^{-1}\delta_{q+1} \chi(\delta_{q+1}^{-1}|R_q(t,x)|) \psi^2(t).
\end{align*}
where $\varepsilon_{\gamma}$ is the constant in Proposition \ref{Prop:convex-representation}.
Then clearly
\begin{align}
\operatorname{supp}_t \rho &\subset N_{\delta_{q+1}}(\operatorname{supp}_t R_q). \label{est-supp-rho}
\end{align}
It follows from the above definition that
\begin{align*}
&|R_q|/\rho = \varepsilon_{\gamma}\frac{|R_q|}{\delta_{q+1} \chi(\delta_{q+1}^{-1}|R_q(t,x)|) \psi^2} \leq \varepsilon_{\gamma} \implies
\mathrm{Id} - R_q/\rho \in B_{\varepsilon_{\gamma}}(\mathrm{Id}) \text{ on } \supp R_q.
\end{align*}
Therefore, the amplitude functions
\begin{align*}
a_{(\overline{\xi})}(t,x) := \rho^{1/2}(t,x) \gamma_{(\overline{\xi})}(\mathrm{Id} - \rho(t,x)^{-1}R_q(t,x) )
\end{align*}
are well-defined and smooth.
Define the velocity perturbation to be $w = w_{q+1}$:
\begin{align*}
w &= w^{(p)} + w^{(c)} + w^{(t)}, \\
w^{(p)} &= \sum_{\overline{\xi} \in \Lambda} a_{(\overline{\xi})} \mathbb{W}_{(\overline{\xi})} = \sum_{\overline{\xi} \in \Lambda} a_{(\overline{\xi})}(t,x) \eta_{(\overline{\xi})}(t,x) B_{\overline{\xi}} e^{i \lambda \overline{\xi} \cdot x},\\
w^{(c)} &= \frac{1}{\lambda_{q+1}} \sum_{\overline{\xi} \in \Lambda} \nabla \left( a_{(\overline{\xi})}\eta_{(\overline{\xi})} \right) \times  W_{(\overline{\xi})},\\
w^{(t)} &= \frac{1}{\mu}\sum_{\overline{\xi} \in \Lambda^+} \mathbb{P}_{LH} \mathbb{P}_{\neq 0}\left( a_{(\overline{\xi})}^2 \eta_{(\overline{\xi})}^2 \overline{\xi} \right),
\end{align*}
where  $\mathbb{P}_{LH} = \mathrm{Id} - \nabla \Delta^{-1} \operatorname{div}$ is the Leray-Helmholtz projection into divergence-free vector field, and $\mathbb{P}_{\neq 0} f = f - \fint_{ \mathbb{T}^3} f dx$. It is well-known that $\mathbb{P}_{LH}$ is bounded on $L^p, 1< p < \infty$ (see, e.g., \cite{Grafakos}).
It follows from Proposition \ref{Prop:Property-W} that
\begin{align}
\sum_{\overline{\xi} \in \Lambda}   a_{(\overline{\xi})}^2 \fint_{ \mathbb{T}^3} \mathbb{W}_{(\overline{\xi})} \otimes \mathbb{W}_{(-\overline{\xi})} dx
&= \rho \mathrm{Id} - R_q. \label{eq:rho-stress}
\end{align}

\subsection{Estimates for perturbations}

\begin{Lemma}
	The following
	bounds hold:
	\begin{align}
	\|\rho\|_{L^{\infty}_t L^1_x} &\leq C \delta_{q+1}, \label{est-rho-L1}\\
	\|\rho^{-1}\|_{C^0(\supp R_q)} & \lesssim \delta_{q+1}^{-1}, \label{est-rho-inverse}\\
	\|\rho\|_{C^N_{t,x}} & \leq C(\delta_{q+1}, \|R_q\|_{C^N}), \label{est-rho-C^N}\\
	\|a_{(\overline{\xi})}\|_{L^{\infty}_t L^2_x} &\lesssim \|\rho\|_{L^{\infty}_t L^1_x}^{1/2} \lesssim \delta_{q+1}^{1/2}, \label{est-a-L2}\\
	\|a_{(\overline{\xi})}\|_{C^N_{t,x}} & \leq C(\delta_{q+1}, \|R_q\|_{C^N}). \label{est-a-C^N}
	\end{align}
\end{Lemma}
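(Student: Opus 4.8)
The plan is to estimate each of the five quantities directly from the definitions, using only the boundedness properties of the cutoff functions $\chi$, $\psi$, the fixed smooth functions $\gamma_{(\overline{\xi})}$ from Proposition \ref{Prop:convex-representation}, and the hypothesis \eqref{est-R_q-L^1} together with the (implicit) assumption that $R_q$ is smooth. I would begin with \eqref{est-rho-L1}: since $\chi(s) \le 1 + s$ and $0 \le \psi \le 1$, we get the pointwise bound $\rho \le \varepsilon_\gamma^{-1}\delta_{q+1}(1 + \delta_{q+1}^{-1}|R_q|) = \varepsilon_\gamma^{-1}(\delta_{q+1} + |R_q|)$; integrating over $\mathbb{T}^3$ (normalized to volume $1$) and taking the sup in $t$, the right-hand side is $\le \varepsilon_\gamma^{-1}(\delta_{q+1} + \|R_q\|_{L^\infty_t L^1_x}) \le 2\varepsilon_\gamma^{-1}\delta_{q+1}$ by \eqref{est-R_q-L^1}, which gives the claimed bound with $C = 2\varepsilon_\gamma^{-1}$.

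Next, \eqref{est-rho-inverse}: on $\supp R_q$ we have $\psi \equiv 1$ by the choice \eqref{eta_0}, and $\chi(\delta_{q+1}^{-1}|R_q|) \ge 1$ since $\chi \ge 1$ everywhere; hence $\rho \ge \varepsilon_\gamma^{-1}\delta_{q+1}$ on $\supp R_q$, so $\rho^{-1} \le \varepsilon_\gamma \delta_{q+1}^{-1}$ there, which is the stated estimate. For \eqref{est-rho-C^N}, I would differentiate the composition $\rho = \varepsilon_\gamma^{-1}\delta_{q+1}\,\chi(\delta_{q+1}^{-1}|R_q(t,x)|)\psi^2(t)$ using the chain and product rules; every derivative falls on $\chi$ (whose derivatives of all orders are bounded, being eventually linear and smooth), on $|R_q|$ (smooth away from its zero set, but the composition $\chi(\delta_{q+1}^{-1}|\cdot|)$ is smooth near $0$ by the definition of $\chi$, so $\chi(\delta_{q+1}^{-1}|R_q|)$ is genuinely $C^\infty$ in $(t,x)$), or on $\psi^2$ (whose derivatives are controlled by powers of $\delta_{q+1}^{-1}$ via \eqref{eta_0}); collecting terms yields a bound depending only on $\delta_{q+1}$ and $\|R_q\|_{C^N_{t,x}}$, i.e. a constant of the form $C(\delta_{q+1}, \|R_q\|_{C^N})$.

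Finally, the two estimates on the amplitudes $a_{(\overline{\xi})} = \rho^{1/2}\gamma_{(\overline{\xi})}(\mathrm{Id} - \rho^{-1}R_q)$. For \eqref{est-a-L2}, note that $\gamma_{(\overline{\xi})}$ is a smooth function on the ball $B_{\varepsilon_\gamma}(\mathrm{Id})$ and the argument $\mathrm{Id} - \rho^{-1}R_q$ lies in that ball on $\supp R_q$ (as established in the text preceding the lemma), while off $\supp R_q$ the function $a_{(\overline{\xi})}$ vanishes since $R_q$ and hence $\rho$ vanish there; so $\gamma_{(\overline{\xi})}$ is evaluated only where it is defined and is therefore bounded by $\sup_{B_{\varepsilon_\gamma}(\mathrm{Id})}|\gamma_{(\overline{\xi})}| =: C$. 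Thus $|a_{(\overline{\xi})}| \le C\rho^{1/2}$ pointwise, so $\|a_{(\overline{\xi})}\|_{L^\infty_t L^2_x}^2 \le C^2 \|\rho\|_{L^\infty_t L^1_x} \lesssim \delta_{q+1}$ by \eqref{est-rho-L1}. For \eqref{est-a-C^N}, I would again use the chain rule: derivatives of $a_{(\overline{\xi})}$ involve derivatives of $\rho^{1/2}$ (controlled via \eqref{est-rho-C^N} together with the lower bound \eqref{est-rho-inverse}, which keeps $\rho^{1/2}$ away from its singular point on $\supp R_q$ where the derivatives are nonzero) and derivatives of $\gamma_{(\overline{\xi})}$ composed with $\mathrm{Id} - \rho^{-1}R_q$, whose $C^N$ norm is controlled by $\|\rho^{-1}\|$, $\|R_q\|_{C^N}$ and $\|\rho\|_{C^N}$ — all of which are bounded in terms of $\delta_{q+1}$ and $\|R_q\|_{C^N}$. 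The one point requiring care — and the mild ``obstacle'' here — is the behavior near the zero set of $R_q$: one must verify that $\rho$ (and $a_{(\overline{\xi})}$) are genuinely smooth there, which is exactly why $\chi$ was chosen to be identically $1$ on $[0,1)$, so that $\chi(\delta_{q+1}^{-1}|R_q|)$ is the constant $1$ in a neighborhood of $\{R_q = 0\}$ and all the compositions are smooth; away from that set, $\rho$ is bounded below and all estimates are routine applications of the chain and product rules.
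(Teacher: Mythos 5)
Your argument follows the same direct route as the paper's (which is quite terse, offering only the $L^1$ calculation and declaring the rest ``direct to verify''), and most of your fleshed-out details are correct: the pointwise bound $\chi(s)\le 1+s$ cleanly yields \eqref{est-rho-L1}; the lower bound $\chi\ge 1$ and $\psi\equiv 1$ on $\operatorname{supp}_t R_q$ give \eqref{est-rho-inverse}; and the observation that $\chi(\delta_{q+1}^{-1}|\cdot|)$ is constant in a neighborhood of the origin is exactly the point that makes $\rho$ smooth across $\{R_q=0\}$.

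There is, however, one inaccuracy in your justification of \eqref{est-a-L2}: you claim that off $\supp R_q$ the function $a_{(\overline{\xi})}$ vanishes ``since $R_q$ and hence $\rho$ vanish there.'' This is false. Where $R_q=0$, one has $\chi(0)=1$, so $\rho(t,x)=\varepsilon_{\gamma}^{-1}\delta_{q+1}\psi^2(t)$, which is strictly positive on the part of $\supp\psi$ lying outside $\operatorname{supp}_t R_q$; correspondingly $a_{(\overline{\xi})}=\rho^{1/2}\gamma_{(\overline{\xi})}(\mathrm{Id})\neq 0$ there. What actually saves the argument is that on that region the argument $\mathrm{Id}-\rho^{-1}R_q$ equals $\mathrm{Id}$, which is still in $B_{\varepsilon_\gamma}(\mathrm{Id})$, so $\gamma_{(\overline{\xi})}$ remains bounded; and where $\rho$ genuinely vanishes (i.e.\ where $\psi(t)=0$) one has $R_q=0$ as well, the quotient is interpreted as $\mathrm{Id}$, and $a_{(\overline{\xi})}=0$. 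Thus the pointwise bound $|a_{(\overline{\xi})}|\lesssim\rho^{1/2}$ and the conclusion $\|a_{(\overline{\xi})}\|_{L^\infty_t L^2_x}\lesssim\|\rho\|_{L^\infty_t L^1_x}^{1/2}$ are still correct, but you should repair the intermediate reasoning. The same care is needed for \eqref{est-a-C^N}: smoothness of $a_{(\overline{\xi})}$ across $\{\psi=0\}$ relies on $R_q\equiv 0$ there and on writing $\rho^{1/2}=\varepsilon_\gamma^{-1/2}\delta_{q+1}^{1/2}\chi^{1/2}(\cdot)\,\psi(t)$ with $\chi\ge 1$ and $\psi\ge 0$ smooth, not on $\rho$ vanishing wherever $R_q$ does.
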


\begin{proof}It follows from \eqref{est-R_q-L^1} that
	\begin{align*}
	\|\rho(t,\cdot)\|_{L^1_x} &=\int_{|R_q| \leq \delta_{q+1}} \rho + \int_{|R_q| > \delta_{q+1}} \rho \lesssim \delta_{q+1} + \int_{|R_q| > \delta_{q+1}} |R_q|\\
	& \leq C \delta_{q+1}.
	\end{align*}
	It is direct to verify \eqref{est-rho-inverse} and  \eqref{est-a-L2}, while \eqref{est-rho-C^N} and \eqref{est-a-C^N} follow from \eqref{eta_0} and \eqref{est-rho-inverse}.
\end{proof}

Now we can estimate the time support of $w_{q+1}$:
\begin{align}
\operatorname{supp}_t w_{q+1} \subset \operatorname{supp}_t \rho \subset \supp \psi \subset N_{\delta_{q+1}}(\operatorname{supp}_t R_q). \label{est-supp-w}
\end{align}

We need the following Lemma, which is a variant of \cite[Lemma 3.6]{BV17}.
\begin{Lemma}(\cite[Lemma 2.1]{MS17})\label{Lemma:improved-Holder}
	Let $f,g \in C^{\infty}(\mathbb{T}^3)$, and $g$ is $(\mathbb{T}/N)^3$ periodic, $N \in \mathbb{N}$. Then for $1 \leq p \leq \infty$,
	\begin{align*}
	\|f g\|_{L^p} \leq \|f\|_{L^p} \|g\|_{L^p}  + C_p N^{-1/p} \|f\|_{C^1} \|g\|_{L^p}.
	\end{align*}
\end{Lemma}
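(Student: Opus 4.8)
The plan is to prove this by partitioning the torus into the $N^3$ small cubes on which $g$ repeats, replacing $f$ by its average over each small cube, and then using periodicity of $g$ together with Jensen's and Minkowski's inequalities. Concretely, since $N \in \mathbb{N}$, write $\mathbb{T}^3 = \bigcup_{j} Q_j$ as a disjoint union of $N^3$ half-open cubes $Q_j$ of side length $1/N$, and set $\bar f_j = N^3 \int_{Q_j} f\,dx$, the mean of $f$ over $Q_j$ with respect to normalized measure (recall $|\mathbb{T}^3| = 1$). Because $g$ is $(\mathbb{T}/N)^3$-periodic, $\int_{Q_j} |g|^p\,dx$ is the same for every $j$, hence equal to $N^{-3}\|g\|_{L^p}^p$.

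On each $Q_j$ one has the pointwise bound $|f(x)| \le |\bar f_j| + \sqrt{3}\,N^{-1}\|f\|_{C^1}$ (the diameter of $Q_j$ is $\sqrt 3/N$), so that
\begin{align*}
\Big(\int_{Q_j} |fg|^p\,dx\Big)^{1/p} \le \big(|\bar f_j| + \sqrt 3\, N^{-1}\|f\|_{C^1}\big)\, N^{-3/p}\|g\|_{L^p}.
\end{align*}
Summing the $p$-th powers over $j$ and applying the triangle inequality in $\ell^p$ (Minkowski) to split the two terms gives
\begin{align*}
\|fg\|_{L^p} \le N^{-3/p}\|g\|_{L^p}\Big[\Big(\sum_j |\bar f_j|^p\Big)^{1/p} + \sqrt 3\, N^{-1}\|f\|_{C^1}\, N^{3/p}\Big].
\end{align*}
Finally, Jensen's inequality applied to the average defining $\bar f_j$ yields $|\bar f_j|^p \le N^3 \int_{Q_j}|f|^p\,dx$, hence $\sum_j |\bar f_j|^p \le N^3 \|f\|_{L^p}^p$; substituting this in and cancelling the powers of $N$ produces $\|fg\|_{L^p} \le \|f\|_{L^p}\|g\|_{L^p} + \sqrt 3\, N^{-1}\|f\|_{C^1}\|g\|_{L^p}$. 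Since $N \ge 1$ and $p \ge 1$ we have $N^{-1} \le N^{-1/p}$, which gives the stated inequality (in fact with the $p$-independent constant $\sqrt 3$ and the stronger weight $N^{-1}$). The case $p = \infty$ is handled by replacing the sums over $j$ by suprema in the same computation, or simply by noting that the claim is then the trivial submultiplicativity of the sup norm.

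There is no real obstacle here; the proof is a short bookkeeping argument. The only points requiring a little care are keeping the normalization of Lebesgue measure consistent throughout (so that $\int_{Q_j}|g|^p = N^{-3}\|g\|_{L^p}^p$), correctly invoking Minkowski in $\ell^p$ rather than a crude $(a+b)^p$ expansion, and treating the $p=\infty$ endpoint separately.
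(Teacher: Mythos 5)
Your proof is correct and in fact yields a slightly stronger estimate than the one stated: you get the weight $N^{-1}$ (with a $p$-independent constant $\sqrt{3}$) in place of $N^{-1/p}$, which implies the claimed bound since $N^{-1}\le N^{-1/p}$ for $N\ge 1$, $p\ge 1$. The route is genuinely different from the Modena--Sz\'{e}kelyhidi argument that the paper cites. That proof works at the level of $|f|^p$: it writes $\|fg\|_{L^p}^p=\sum_j\int_{Q_j}|f|^p|g|^p$, replaces $|f|^p$ on each cube by its mean, and controls the remainder via the Lipschitz constant of $|f|^p$, which is $\lesssim p\|f\|_{C^1}^p$; the $p$-th root taken at the end is what produces the weight $N^{-1/p}$ and a $p$-dependent constant. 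You instead replace $f$ itself (not $|f|^p$) by its cube average, push the uniform error $\sqrt{3}\,N^{-1}\|f\|_{C^1}$ through the $L^p$ norm of $fg$ on each cube, and recombine across cubes using the triangle inequality in $\ell^p$ over the cube index together with Jensen applied to $|\bar{f}_j|^p$. Splitting the two contributions at the level of the $\ell^p$ norm of the sequence $(\bar{f}_j)_j$, rather than at the level of $\|fg\|_{L^p}^p$, is precisely what avoids the $p$-th-root loss and yields $N^{-1}$. Both arguments rest on the same structural inputs (the partition into $N^3$ congruent cubes and the $j$-independence of $\int_{Q_j}|g|^p$ by periodicity), so the gain is in the bookkeeping rather than in a new idea; nonetheless your bound is strictly better and the write-up, including the separate treatment of $p=\infty$, is clean and correct.
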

Let us denote
\begin{align}
\mathcal{C}_N =C(\sup_{\overline{\xi} \in \Lambda}\|a_{(\overline{\xi})}\|_{C^N_{t,x}})
\end{align}
to be some polynomials depending on $\sup_{\overline{\xi} \in \Lambda}\|a_{(\overline{\xi})}\|_{C^N_{t,x}}$.
\begin{Lemma}\label{Lemma:est-w}
	Suppose the parameters satisfy \eqref{ineq:parameters} and
	\begin{align}
	r^{3/2} \leq \mu .\label{ass}%\leq \sigma^{-1/2} r
	\end{align}
	Then the following estimates for the perturbations hold:
	\begin{align}
	\|w_{q+1}^{(p)}\|_{L^{\infty}_t L^2_x} & \lesssim \delta_{q+1}^{1/2} + (\lambda_{q+1} \sigma)^{-1/2} \mathcal{C}_1,\\
	\|w_{q+1}\|_{L^{\infty}_t L^p_x} & \lesssim r^{3/2-3/p}\mathcal{C}_1, \label{est-w-Lp}\\
	\|w_{q+1}^{(c)}\|_{L^{\infty}_t L^p_x} + \|w_{q+1}^{(t)}\|_{L^{\infty}_t L^p_x} & \lesssim (\sigma r + \mu^{-1}r^{3/2})r^{3/2-3/p}\mathcal{C}_1, \\
	\|\ptl_t w_{q+1}^{(p)}\|_{L^{\infty}_t L^p_x} + \|\ptl_t w_{q+1}^{(c)}\|_{L^{\infty}_t L^p_x} & \lesssim \lambda_{q+1} \sigma \mu r^{5/2 - 3/p}\mathcal{C}_2, \label{est-w1p}\\
	\| |\nabla|^N w_{q+1}\|_{L^{\infty}_t L^p_x} & \lesssim r^{3/2-3/p} \lambda_{q+1}^N \mathcal{C}_{N+1}, \label{est-nabla-s-w}
	\end{align}
	for $1 < p < \infty, N \geq 1$.
\end{Lemma}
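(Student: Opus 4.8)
The plan is to treat the perturbation $w_{q+1}=w^{(p)}+w^{(c)}+w^{(t)}$ summand by summand, bounding each piece together with its relevant $\ptl_t$- and $|\nabla|^N$-derivatives by combining four ingredients: the $L^p_x$ bounds \eqref{est-W-xi-L^p}--\eqref{est-eta-xi-L^p} for the building blocks $\mathbb{W}_{(\overline{\xi})}$ and $\eta_{(\overline{\xi})}$; the $C^N_{t,x}$ bounds \eqref{est-a-C^N} together with the $L^2$ bound \eqref{est-a-L2} for the amplitudes $a_{(\overline{\xi})}$; the $L^p$-boundedness of $\mathbb{P}_{LH}$ and $\mathbb{P}_{\neq 0}$; and the parameter inequalities \eqref{ineq:parameters} and \eqref{ass}. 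With the single exception of the $L^2_x$ bound for $w^{(p)}$, every estimate then reduces to the Leibniz rule together with H\"older's inequality, so I will describe those briefly and dwell on the $L^2_x$ bound.

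For $\|w^{(p)}_{q+1}\|_{L^\infty_t L^2_x}$ the triangle inequality would only yield $\mathcal{C}_0$ in place of $\delta_{q+1}^{1/2}$, so I would instead expand, using that $w^{(p)}$ is real (Proposition \ref{Prop:Property-W}),
\begin{align*}
\|w^{(p)}_{q+1}\|_{L^2_x}^2 = \sum_{\overline{\xi},\overline{\xi}'\in\Lambda} \int_{\mathbb{T}^3} a_{(\overline{\xi})}\, a_{(\overline{\xi}')}\, \mathbb{W}_{(\overline{\xi})}\cdot\mathbb{W}_{(\overline{\xi}')}\, dx .
\end{align*}
The off-diagonal terms $\overline{\xi}'\neq-\overline{\xi}$ carry, by \eqref{est-supp-frequency-W2}, a factor $\mathbb{W}_{(\overline{\xi})}\otimes\mathbb{W}_{(\overline{\xi}')}$ whose Fourier support lies in $[\lambda_{q+1}/5,4\lambda_{q+1})$; integrating by parts against the smooth amplitude $a_{(\overline{\xi})}a_{(\overline{\xi}')}$ --- after a smooth frequency localization and using the $L^p$-boundedness of the resulting Fourier multiplier --- gains arbitrarily many powers of $\lambda_{q+1}^{-1}$, so these contributions are negligible. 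The diagonal terms $\overline{\xi}'=-\overline{\xi}$ collapse, via $a_{(-\overline{\xi})}=a_{(\overline{\xi})}$, $\eta_{(-\overline{\xi})}=\eta_{(\overline{\xi})}$ and $B_{\overline{\xi}}\cdot B_{-\overline{\xi}}=|B_{\overline{\xi}}|^2=1$, to $\sum_{\overline{\xi}\in\Lambda}\|a_{(\overline{\xi})}\eta_{(\overline{\xi})}\|_{L^2_x}^2$. For these I would invoke Lemma \ref{Lemma:improved-Holder} with $N=\lambda_{q+1}\sigma$, the side length of the period cell of $\eta_{(\overline{\xi})}$, together with $\|\eta_{(\overline{\xi})}\|_{L^2_x}\lesssim1$ and \eqref{est-a-L2}, to get $\|a_{(\overline{\xi})}\eta_{(\overline{\xi})}\|_{L^2_x}\lesssim\|a_{(\overline{\xi})}\|_{L^2_x}+(\lambda_{q+1}\sigma)^{-1/2}\|a_{(\overline{\xi})}\|_{C^1}\lesssim\delta_{q+1}^{1/2}+(\lambda_{q+1}\sigma)^{-1/2}\mathcal{C}_1$, which is exactly the claimed bound.

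The remaining estimates are mechanical. For $\|w_{q+1}\|_{L^p_x}$ one uses the triangle inequality with $\|a_{(\overline{\xi})}\eta_{(\overline{\xi})}W_{(\overline{\xi})}\|_{L^p_x}\le\|a_{(\overline{\xi})}\|_{C^0}\|\eta_{(\overline{\xi})}\|_{L^p_x}\lesssim r^{3/2-3/p}\mathcal{C}_0$ for $w^{(p)}$, together with the $w^{(c)}$ and $w^{(t)}$ bounds below and $\sigma r<1$, $\mu^{-1}r^{3/2}\le1$. For $w^{(c)}=\lambda_{q+1}^{-1}\sum_{\overline{\xi}}\nabla(a_{(\overline{\xi})}\eta_{(\overline{\xi})})\times W_{(\overline{\xi})}$ one expands $\nabla(a_{(\overline{\xi})}\eta_{(\overline{\xi})})$ by Leibniz and uses $\|\nabla\eta_{(\overline{\xi})}\|_{L^p_x}\lesssim\lambda_{q+1}\sigma r\cdot r^{3/2-3/p}$ from \eqref{est-eta-xi-L^p}, the prefactor $\lambda_{q+1}^{-1}$ turning $\lambda_{q+1}\sigma r$ into $\sigma r$. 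For $w^{(t)}$ one uses the $L^p$-boundedness of $\mathbb{P}_{LH}\mathbb{P}_{\neq 0}$ and $\|a_{(\overline{\xi})}^2\eta_{(\overline{\xi})}^2\|_{L^p_x}\le\|a_{(\overline{\xi})}\|_{C^0}^2\|\eta_{(\overline{\xi})}\|_{L^{2p}_x}^2\lesssim r^{3-3/p}\mathcal{C}_0^2=r^{3/2}\cdot r^{3/2-3/p}\mathcal{C}_0^2$, so the prefactor $\mu^{-1}$ supplies the factor $\mu^{-1}r^{3/2}$, which is $\le1$ by \eqref{ass}. For the time derivatives, $\ptl_t\mathbb{W}_{(\overline{\xi})}=(\ptl_t\eta_{(\overline{\xi})})W_{(\overline{\xi})}$, so \eqref{est-W-xi-L^p}--\eqref{est-eta-xi-L^p} with $K=1$ apply; the worst term has the time derivative on $\eta_{(\overline{\xi})}$ (factor $\lambda_{q+1}\sigma r\mu$), and in $\ptl_t w^{(c)}$ a mixed space--time derivative on $\eta_{(\overline{\xi})}$ divided by $\lambda_{q+1}$ produces $\sigma r\cdot\lambda_{q+1}\sigma\mu r^{5/2-3/p}\le\lambda_{q+1}\sigma\mu r^{5/2-3/p}$, with $\mathcal{C}_2$ absorbing the second derivatives of $a_{(\overline{\xi})}$. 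Finally, for $\||\nabla|^N w_{q+1}\|_{L^p_x}$ one notes that each block $a_{(\overline{\xi})}\eta_{(\overline{\xi})}W_{(\overline{\xi})}$ has Fourier spectrum concentrated in the annulus $|\xi|\sim\lambda_{q+1}$ (cf.\ \eqref{est-supp-frequency-W}) up to a rapidly decaying amplitude tail, so $|\nabla|^N$ acts as multiplication by $\sim\lambda_{q+1}^N$ (by Bernstein together with a Mikhlin estimate after smooth localization, or directly by Leibniz when $N$ is even), while the contributions from $w^{(c)}$ and $w^{(t)}$ are smaller because $\sigma r<1$ and $\mu^{-1}r^{3/2}\le1$; up to $N$ of the derivatives may fall on $a_{(\overline{\xi})}$, and together with the spatial derivative already present in $w^{(c)}$ this accounts for the $\mathcal{C}_{N+1}$.

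The hard part is the $L^2_x$ estimate for $w^{(p)}$: one must simultaneously use the improved H\"older inequality to trade $\|a_{(\overline{\xi})}\|_{L^\infty}$ for $\|a_{(\overline{\xi})}\|_{L^2}\lesssim\delta_{q+1}^{1/2}$ at the cost of the $(\lambda_{q+1}\sigma)^{-1/2}$ error, and dispose of the oscillatory off-diagonal contributions using their frequency support at the large scale $\lambda_{q+1}$. The $|\nabla|^N$ bound is a lesser technical point, needing a little Littlewood--Paley bookkeeping because $|\nabla|^N$ is non-local.
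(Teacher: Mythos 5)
Your approach for the $L^\infty_t L^2_x$ bound on $w^{(p)}_{q+1}$ is genuinely different from the paper's and is in fact more complicated than necessary. The paper simply applies the triangle inequality over $\overline{\xi}\in\Lambda$ and then invokes Lemma \ref{Lemma:improved-Holder} directly to the product $a_{(\overline{\xi})}\mathbb{W}_{(\overline{\xi})}$, using that $\mathbb{W}_{(\overline{\xi})}$ itself (not merely $\eta_{(\overline{\xi})}$) is $(\mathbb{T}/\lambda\sigma)^3$-periodic (since $1/\sigma\in\mathbb{N}$ forces $\lambda\sigma\mid\lambda$); this gives $\|a_{(\overline{\xi})}\mathbb{W}_{(\overline{\xi})}\|_{L^2}\lesssim\|a_{(\overline{\xi})}\|_{L^2}\|\mathbb{W}_{(\overline{\xi})}\|_{L^2}+(\lambda\sigma)^{-1/2}\|a_{(\overline{\xi})}\|_{C^1}\|\mathbb{W}_{(\overline{\xi})}\|_{L^2}$ in one step, with no need to expand the square or separate diagonal from off-diagonal contributions. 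Your expansion of $\|w^{(p)}\|_{L^2}^2$ does lead to the right answer on the diagonal (you correctly collapse $\mathbb{W}_{(\overline{\xi})}\cdot\mathbb{W}_{(-\overline{\xi})}=\eta_{(\overline{\xi})}^2$ and apply the improved H\"older to $a_{(\overline{\xi})}\eta_{(\overline{\xi})}$), but the off-diagonal argument as written has a gap: the integration-by-parts/nonstationary-phase bound you invoke gives, say, $\lambda_{q+1}^{-K}\|a_{(\overline{\xi})}a_{(\overline{\xi}')}\|_{C^K}\lesssim\lambda_{q+1}^{-K}\mathcal{C}_K^2$, and it is not automatic that this is dominated by $\delta_{q+1}+(\lambda_{q+1}\sigma)^{-1}\mathcal{C}_1^2$ with only $\mathcal{C}_1$ appearing on the right --- your route degrades the regularity of the amplitude that the stated bound tracks. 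In the ultimate application (with $\lambda_{q+1}$ chosen large after the $\mathcal{C}_N$ are fixed) this does not matter, but as a proof of the lemma as stated it is not closed; the paper's route avoids the issue entirely.

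The remaining estimates in your proposal --- for $w^{(c)}$, $w^{(t)}$, the time derivatives, and $L^p$ of the full perturbation --- match the paper's computations line for line: Leibniz, \eqref{est-W-xi-L^p}--\eqref{est-eta-xi-L^p}, $L^p$-boundedness of $\mathbb{P}_{LH}\mathbb{P}_{\neq 0}$, and the parameter inequalities $\sigma r<1$, $\mu^{-1}r^{3/2}\le1$ to absorb the prefactors. Your bookkeeping of which derivatives land on the amplitudes and why this produces $\mathcal{C}_2$ in \eqref{est-w1p} and $\mathcal{C}_{N+1}$ in \eqref{est-nabla-s-w} is correct.

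On the $|\nabla|^N$ bound, one imprecision worth flagging: you assert that ``each block $a_{(\overline{\xi})}\eta_{(\overline{\xi})}W_{(\overline{\xi})}$ has Fourier spectrum concentrated in the annulus $|\xi|\sim\lambda_{q+1}$.'' That is true of $\mathbb{W}_{(\overline{\xi})}=\eta_{(\overline{\xi})}W_{(\overline{\xi})}$ (this is \eqref{est-supp-frequency-W}), but multiplication by the smooth low-frequency amplitude $a_{(\overline{\xi})}$ spreads the spectrum to all frequencies, so no annulus localization is available for the full product, and ``Bernstein after smooth localization'' cannot be applied to it directly. The paper instead bounds $\|\nabla^N w_{q+1}\|_{L^p}$ purely by Leibniz, which is the fallback you also mention; one then passes to $\||\nabla|^N w_{q+1}\|_{L^p}$ via the $L^p$-boundedness of Riesz transforms ($1<p<\infty$), valid for all integer $N$, not just $N$ even. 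I would state that Riesz-transform step explicitly rather than restricting to even $N$.

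Everything else in the proposal is correct and follows the paper's route.
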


\begin{proof}	
	Since $\mathbb{W}_{(\overline{\xi})}$ is $(\mathbb{T}/\lambda \sigma)^3$ periodic, it follows from \eqref{est-W-xi-L^p},  \eqref{est-a-L2}, and Lemma \ref{Lemma:improved-Holder} that
	\begin{align*}
	\|w_{q+1}^{(p)}\|_{L^{\infty}_t L^2_x} & \lesssim \sum_{\overline{\xi} \in \Lambda} (\|a_{(\overline{\xi})}\|_{L^{\infty}_t L^2_x}    + (\lambda_{q+1} \sigma)^{-1/2}  \|a_{(\overline{\xi})}\|_{C^1}) \|\mathbb{W}_{(\overline{\xi})}\|_{L^{\infty}_t L^2_x}\\
	& \lesssim \delta_{q+1}^{1/2} + (\lambda_{q+1} \sigma)^{-1/2}\mathcal{C}_1.
	\end{align*}
	In view of \eqref{ineq:parameters}, \eqref{est-W-xi-L^p} and \eqref{est-eta-xi-L^p} yield that
	\begin{align*}
	\|w_{q+1}^{(p)}\|_{L^{\infty}_t L^p_x} & \lesssim \sum_{\overline{\xi} \in \Lambda} \|a_{(\overline{\xi})}\|_{C^0}\|\mathbb{W}_{(\overline{\xi})}\|_{L^{\infty}_t L^p_x}\lesssim r^{3/2-3/p}\mathcal{C}_0,\\
	\|w_{q+1}^{(c)}\|_{L^{\infty}_t L^p_x}  & \lesssim \lambda_{q+1}^{-1} \sum_{\overline{\xi} \in \Lambda} \left(\| \eta_{(\overline{\xi})} \|_{L^{\infty}_t L^p_x} + \| \nabla \eta_{(\overline{\xi})} \|_{L^{\infty}_t L^p_x}\right)\|a_{(\overline{\xi})}\|_{C^1}\|\mathbb{W}_{(\overline{\xi})}\|_{L^{\infty}_t L^p_x}\\
	& \lesssim (\sigma r) r^{3/2-3/p}\mathcal{C}_1,\\
	\|w_{q+1}^{(t)}\|_{L^{\infty}_t L^p_x} &  \lesssim \mu^{-1}\sum_{\overline{\xi} \in \Lambda^+} \|a_{(\overline{\xi})}^2 \eta_{(\overline{\xi})}^2 \overline{\xi} \|_{L^{\infty}_t L^p_x} \lesssim  \mu^{-1} \sum_{\overline{\xi} \in \Lambda^+}\|a_{(\overline{\xi})}^2\|_{C^0} \|  \eta_{(\overline{\xi})} \|_{L^{\infty}_t L^{2p}_x}^2 \\
	&\lesssim \mu^{-1}r^{3-3/p}\mathcal{C}_0,
	\end{align*}
	where the boundedness of $\mathbb{P}_{LH}$ and $\mathbb{P}_{\neq 0}$ on $L^p$, for $1 < p < \infty$,  is used in the first inequality of the estimate for $\|w_{q+1}^{(t)}\|_{L^{\infty}_t L^p_x}$. In the same way, we can estimate
	\begin{align*}
	\|\ptl_t w_{q+1}^{(p)}\|_{L^{\infty}_t L^p_x}  & \lesssim \sum_{\overline{\xi} \in \Lambda} \|\ptl_t a_{(\overline{\xi})}\|_{C^0} \| \mathbb{W}_{(\overline{\xi})}\|_{L^{\infty}_t L^p_x} + \|a_{(\overline{\xi})}\|_{C^0} \|\ptl_t \mathbb{W}_{(\overline{\xi})}\|_{L^{\infty}_t L^p_x}\\
	& \lesssim  \lambda_{q+1} \sigma \mu r^{5/2 - 3/p} \mathcal{C}_1,\\
	\|\ptl_t w_{q+1}^{(c)}\|_{L^{\infty}_t L^p_x}  & \lesssim \lambda_{q+1}^{-1} \sum_{\overline{\xi} \in \Lambda} \| a_{(\overline{\xi})}\|_{C^2_{t,x}}  \Big( \| \eta_{(\overline{\xi})} \|_{L^{\infty}_t L^p_x} + \| \nabla \eta_{(\overline{\xi})} \|_{L^{\infty}_t L^p_x} + \|\ptl_t \eta_{(\overline{\xi})} \|_{L^{\infty}_t L^p_x}    \\
	& \qquad + \| \ptl_t \nabla \eta_{(\overline{\xi})}\|_{L^{\infty}_t L^p_x}\Big) \lesssim \sigma r\lambda_{q+1} \sigma \mu r^{5/2 - 3/p} \mathcal{C}_2 \lesssim \lambda_{q+1} \sigma \mu r^{5/2 - 3/p}\mathcal{C}_2.
	\end{align*}
	For $N \geq 1$, Using \eqref{est-W-xi-L^p} and \eqref{est-eta-xi-L^p}, we obtain that
	\begin{align*}
	\|\nabla^N w_{q+1}^{(p)}\|_{L^{\infty}_t L^p_x}  & \lesssim \sum_{\overline{\xi} \in \Lambda} \sum_{k=0}^N \|\nabla^k a_{(\overline{\xi})}\|_{C^0} \| \nabla^{N-k} \mathbb{W}_{(\overline{\xi})}\|_{L^{\infty}_t L^p_x} \\
	& \lesssim  \lambda_{q+1}^N  r^{3/2 - 3/p}\mathcal{C}_N,\\
	\|\nabla^N w_{q+1}^{(c)}\|_{L^{\infty}_t L^p_x}  & \lesssim \lambda_{q+1}^{-1} \sum_{\overline{\xi} \in \Lambda} \sum_{m=0}^{N}\sum_{k=0}^m \lambda_{q+1}^{N-m}  \|\nabla^{k+1} a_{(\overline{\xi})}\|_{C^0} \| \nabla^{m-k} \eta_{(\overline{\xi})}\|_{L^{\infty}_t L^p_x} \\
	& \quad + \lambda_{q+1}^{-1} \sum_{\overline{\xi} \in \Lambda} \sum_{m=0}^{N}\sum_{k=0}^m \lambda_{q+1}^{N-m}  \|\nabla^{k} a_{(\overline{\xi})}\|_{C^0} \| \nabla^{m-k+1} \eta_{(\overline{\xi})}\|_{L^{\infty}_t L^p_x}\\
	& \lesssim \lambda_{q+1}^N  r^{3/2 - 3/p} \mathcal{C}_{N+1},\\
	\|\nabla^N w_{q+1}^{(t)}\|_{L^{\infty}_t L^p_x}  & \lesssim  \mu^{-1}\sum_{\overline{\xi} \in \Lambda} \sum_{m=0}^{N} \| \nabla^{N-m}(a_{(\overline{\xi})}^2)\|_{C^0}  \sum_{k=0}^m \|\nabla^{k}\eta_{(\overline{\xi})}\|_{L^{\infty}_t L^{2p}_x} \|\nabla^{m-k}\eta_{(\overline{\xi})}\|_{L^{\infty}_t L^{2p}_x}\\
	&\lesssim \lambda_{q+1}^N  r^{3/2 - 3/p}\frac{(\sigma r)^N r^{3/2}}{\mu} \mathcal{C}_N  \lesssim \lambda_{q+1}^N  r^{3/2 - 3/p}\mathcal{C}_N,
	\end{align*}
	where we use \eqref{ineq:parameters} and \eqref{ass}.
 \end{proof}

\subsection{Estimates for the stress}
Let us recall the following operator in \cite{dLSz4}.
\begin{Lemma}[symmetric anti-divergence]\label{Lemma:symm-anti-div} There exists a linear operator $\mathcal{R}$, of order $-1$, mapping vector fields to symmetric matrices such that
	\begin{align}
	\nabla \cdot \mathcal{R}(u) = u - \fint_{ \mathbb{T}^3} u, \label{eq:mcR}
	\end{align}
	with standard Calderon-Zygmund estimates, for $1 < p < \infty$,
	\begin{equation}
	\| \mathcal{R} \|_{L^p \to W^{1,p}} \lesssim 1, \quad \|  \mathcal{R} \|_{C^0 \to C^0} \lesssim 1,\quad
	\| \mathcal{R} \mathbb{P}_{\neq 0} u \|_{L^p} \lesssim \| |\nabla|^{-1}\mathbb{P}_{\neq 0} u\|_{L^p}. \label{est-mcR}
	\end{equation}	
\end{Lemma}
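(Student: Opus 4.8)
The plan is to write $\mathcal{R}$ explicitly as a fixed combination of Riesz-type operators — essentially the operator of \cite{dLSz4} — and then verify each claim by a short direct computation. For a smooth $u$ on $\mathbb{T}^3$ I would set, on mean-free fields (extending to general $u$ by composing first with $\mathbb{P}_{\neq 0}$),
\[
(\mathcal{R}u)_{ij} := \Delta^{-1}(\ptl_i u_j + \ptl_j u_i) - \Delta^{-2}\ptl_i\ptl_j(\nabla\cdot u),
\]
where $\Delta^{-1}$ on $\mathbb{T}^3$ is the Fourier multiplier equal to $-|k|^{-2}$ on $k\in\mathbb{Z}^3\setminus\{0\}$ and to $0$ at $k=0$. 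This matrix is symmetric by inspection, and each of its entries is a Fourier multiplier homogeneous of degree $-1$ in $k$ and smooth on $\mathbb{R}^3\setminus\{0\}$, so $\mathcal{R}$ is an operator of order $-1$. (If one prefers, one may further subtract $\tfrac13(\operatorname{tr}\mathcal{R}u)\,\mathrm{Id}$ to make the range trace-free, but this is not needed below.)

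The first step is the defining identity \eqref{eq:mcR}. Taking the divergence and using $\Delta^{-1}\Delta = \mathbb{P}_{\neq 0}$ on the torus,
\[
\ptl_i(\mathcal{R}u)_{ij} = \Delta^{-1}\big(\Delta u_j + \ptl_j\nabla\cdot u\big) - \Delta^{-1}\ptl_j\nabla\cdot u = \mathbb{P}_{\neq 0}u_j = u_j - \fint_{\mathbb{T}^3} u_j,
\]
which is \eqref{eq:mcR}; read on the Fourier side this is the one-line identity $ik_i(\widehat{\mathcal{R}u})_{ij}(k) = \hat{u}_j(k)$ for $k\neq 0$.

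Next come the Calder\'on--Zygmund bounds. The symbol of $|\nabla|\mathcal{R}$ is homogeneous of degree $0$ and smooth away from the origin, hence satisfies the H\"ormander--Mikhlin condition on $\mathbb{Z}^3\setminus\{0\}$; this gives $\||\nabla|\mathcal{R}u\|_{L^p}\lesssim\|u\|_{L^p}$ for $1<p<\infty$, and since the symbol of $\mathcal{R}$ itself is also Mikhlin (bounded, with $|\ptl^\alpha m(k)|\lesssim|k|^{-1-|\alpha|}\lesssim|k|^{-|\alpha|}$ on $|k|\geq 1$), $\|\mathcal{R}u\|_{L^p}\lesssim\|u\|_{L^p}$; together these yield $\|\mathcal{R}\|_{L^p\to W^{1,p}}\lesssim 1$. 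For the last estimate in \eqref{est-mcR} I would factor $\mathcal{R}\mathbb{P}_{\neq 0} = (\mathcal{R}|\nabla|)\circ(|\nabla|^{-1}\mathbb{P}_{\neq 0})$; since $\mathcal{R}|\nabla|$ is a degree-$0$, hence $L^p$-bounded, multiplier, $\|\mathcal{R}\mathbb{P}_{\neq 0}u\|_{L^p}\lesssim\||\nabla|^{-1}\mathbb{P}_{\neq 0}u\|_{L^p}$.

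The only point that is not pure multiplier theory — and the mild obstacle here — is the bound $\|\mathcal{R}\|_{C^0\to C^0}\lesssim 1$, since order-$0$ Calder\'on--Zygmund operators need not be bounded on $L^\infty$. For this I would exploit that $\mathcal{R}$ has order $-1$: its convolution kernel on $\mathbb{T}^3$ is the periodization of a kernel on $\mathbb{R}^3$ with a singularity of order $|x|^{-2}$ at the origin (the inverse Fourier transform of a degree-$(-1)$ homogeneous symbol in dimension $3$ is homogeneous of degree $-2$, with no logarithmic correction in this range), and $|x|^{-2}$ is locally integrable in $\mathbb{R}^3$. Hence $\mathcal{R}$ is convolution against an $L^1(\mathbb{T}^3)$ kernel, so it maps $L^\infty$ to $L^\infty$ with norm controlled by the $L^1$ norm of the kernel and preserves continuity by dominated convergence; this gives $\|\mathcal{R}\|_{C^0\to C^0}\lesssim 1$ and completes the proof.
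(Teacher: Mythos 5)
Your construction is correct but genuinely different in two respects from the paper's. First, your explicit multiplier formula
\[
(\mathcal{R}u)_{ij}=\Delta^{-1}(\partial_i u_j+\partial_j u_i)-\Delta^{-2}\partial_i\partial_j(\nabla\cdot u)
\]
is a perfectly valid symmetric anti-divergence, and your one-line Fourier verification of \eqref{eq:mcR} is fine; the paper instead writes $\mathcal{R}u$ in terms of $v=\Delta^{-1}\mathbb{P}_{\neq 0}u$ via $\tfrac14(\nabla\mathbb{P}_{LH}v+(\nabla\mathbb{P}_{LH}v)^T)+\tfrac34(\nabla v+(\nabla v)^T)-\tfrac12(\nabla\cdot v)\mathrm{Id}$, which after expanding $\mathbb{P}_{LH}=\mathrm{Id}-\nabla\Delta^{-1}\nabla\cdot$ differs from yours in the coefficient of $\partial_i\partial_j\Delta^{-2}(\nabla\cdot u)$ and by a $\tfrac12(\Delta^{-1}\nabla\cdot u)\,\delta_{ij}$ term. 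Both satisfy the divergence identity; there is no unique anti-divergence, and neither is ``the'' right one. Second, and more substantively, your treatment of the $C^0\to C^0$ bound diverges from the paper's. You argue that $\mathcal{R}$ has a convolution kernel with only a $|x|^{-2}$ singularity on $\mathbb{T}^3$, hence lies in $L^1(\mathbb{T}^3)$, and then $L^\infty\to L^\infty$ boundedness is immediate with continuity preserved by $L^1$-continuity of translation. This is correct, but it quietly relies on a transference fact — that the torus kernel of a Mikhlin symbol homogeneous of degree $-1$ agrees near the origin with the Euclidean kernel up to a smooth error — which deserves a citation or a sentence of justification, since the naive periodization $\sum_{n}K_0(x+n)$ of a degree-$(-2)$ kernel in $\mathbb{R}^3$ is not absolutely convergent. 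The paper avoids this entirely by a soft argument: $\|\mathcal{R}u\|_{C^{1/4}}\lesssim\|\mathcal{R}u\|_{W^{1,4}}\lesssim\|u\|_{L^4}\lesssim\|u\|_{C^0}$, i.e.\ combine the already-established $L^p\to W^{1,p}$ bound with Morrey embedding $W^{1,4}(\mathbb{T}^3)\hookrightarrow C^{0,1/4}$. That route is shorter, reuses estimates you already have, and even gives Hölder regularity of $\mathcal{R}u$ for free; your kernel argument is more elementary in spirit and shows directly why order $-1$ is the mechanism, but it needs the periodization point filled in to be fully rigorous. For the remaining $L^p$ estimates your Hörmander--Mikhlin argument matches what the paper does by citing Grafakos.
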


\begin{proof}
	Suppose $u \in C^{\infty}(\mathbb{T}^3, \mathbb{R}^3)$ is a smooth vector field. Define
	\begin{align*}
	 \mathcal{R}(u) = \frac{1}{4}\left(\nabla \mathbb{P}_{LH} v + (\nabla \mathbb{P}_{LH} v)^T\right) + \frac{3}{4}\left(\nabla v + (\nabla v)^T\right) - \frac{1}{2}(\nabla \cdot v) \mathrm{Id}
	\end{align*}
	where $v \in C^{\infty}(\mathbb{T}^3, \mathbb{R}^3)$ is the unique solution to $\Delta v = u - \fint_{ \mathbb{T}^3} u$ with $\fint_{ \mathbb{T}^3} v = 0$.
	
	It is direct to verify that $\mathcal{R}(u)$ is a symmetric matrix field depending linearly on $u$ and satisfies \eqref{eq:mcR}. Note that $\mathcal{R}$ is a constant coefficient ellitpic operator of order $-1$.  We refer to \cite{Grafakos} for the Calderon-Zygmund estimates $\| \mathcal{R} \|_{L^p \to W^{1,p}} \lesssim 1$ and $\| \mathcal{R} \mathbb{P}_{\neq 0} u \|_{L^p} \lesssim \| |\nabla|^{-1}\mathbb{P}_{\neq 0} u\|_{L^p}$. Combining these with Sobolev embeddings, we have $\| \mathcal{R} u\|_{C^{\alpha}} \lesssim \|\mathcal{R} u\|_{W^{1,4}} \lesssim \| u\|_{L^4} \lesssim \| u\|_{C^0}$, with $\alpha = 1/4$.
\end{proof}

We have the following variant of \cite[Lemma B.1]{BV17} in \cite{BV17}.
\begin{Lemma}\label{Lemma:commutator-est}
	Let $a\in C^2(\mathbb{T}^3)$.
	For $1 < p < \infty$, and any smooth function $f \in L^p(\mathbb{T}^3)$, we have
	\begin{align}
	\| |\nabla|^{-1}\mathbb{P}_{\neq 0}(a \mathbb{P}_{\geq k} f)\|_{L^p(\mathbb{T}^3)} \lesssim k^{-1} \|\nabla^2 a\|_{L^{\infty}(\mathbb{T}^3)} \|f\|_{L^p(\mathbb{T}^3)}. \label{est-commutator-1}
	\end{align}
\end{Lemma}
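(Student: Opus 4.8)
The plan is to reduce the estimate to a standard commutator bound for the composition $|\nabla|^{-1}\mathbb{P}_{\neq 0}(a\,\cdot\,)$, exploiting that the argument $f$ has been frequency-localized to $\{|\xi|\geq k\}$. First I would write $a$ in terms of its Fourier series and set up the bilinear Fourier multiplier whose symbol is
\[
m(\xi,\zeta) = \frac{\mathbf{1}_{\xi+\zeta\neq 0}}{|\xi+\zeta|},\qquad \text{restricted to } |\zeta|\geq k,
\]
so that $|\nabla|^{-1}\mathbb{P}_{\neq 0}(a\,\mathbb{P}_{\geq k} f)$ has Fourier coefficients $\sum_{\zeta:\,|\zeta|\geq k} \widehat{a}(\xi-\zeta)\, m(\xi,\zeta)\,\widehat{f}(\zeta)$. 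The key algebraic point is that on the support of the symbol one has $|\xi|$ comparable to or larger than $|\zeta|-|\xi-\zeta|$; the useful regime is $|\xi-\zeta|\ll|\zeta|$, where $|\xi+\zeta|^{-1}\approx|\zeta|^{-1}\leq k^{-1}$, and the regime $|\xi-\zeta|\gtrsim|\zeta|\geq k$, where we instead extract two derivatives from $a$. Concretely, I would write $|\xi+\zeta|^{-1} = |\xi+\zeta|^{-1}\cdot\frac{|\xi-\zeta|^2}{|\xi-\zeta|^2}$ is not quite it; rather the cleanest route is the kernel/commutator identity: $|\nabla|^{-1}\mathbb{P}_{\neq 0}(a\,\mathbb{P}_{\geq k} f)$ can be compared to $k^{-1}$ times a zero-order Calder\'on–Zygmund operator applied to $a$ and $f$, plus a commutator of $|\nabla|^{-1}$ with multiplication by $a$ that gains one derivative, hitting $\nabla a$; iterating once more gains another, hitting $\nabla^2 a$.

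More precisely, the plan is: (i) decompose $f = \mathbb{P}_{\geq k} f$ and use that $|\nabla|^{-1}\mathbb{P}_{\neq 0} = |\nabla|^{-1}\mathbb{P}_{\geq k}\,' $ up to harmless pieces, exploiting the frequency support to write $|\nabla|^{-1}$ acting on functions of frequency $\gtrsim k$, hence $\||\nabla|^{-1}\mathbb{P}_{\neq 0}(a\mathbb{P}_{\geq k}f)\|_{L^p}$ is bounded by the norm of a Fourier multiplier of order $-1$ evaluated mostly at scale $\gtrsim k$. (ii) Perform a Littlewood–Paley-type splitting of $a$ into $\mathbb{P}_{\leq k/2} a$ and $\mathbb{P}_{> k/2} a$. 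For the low-frequency part of $a$, the product $\mathbb{P}_{\leq k/2}a\cdot\mathbb{P}_{\geq k} f$ has frequency support in $\{|\xi|\geq k/2\}$, so $|\nabla|^{-1}$ of it is bounded by $(k/2)^{-1}\|\mathbb{P}_{\leq k/2}a\cdot\mathbb{P}_{\geq k}f\|_{L^p}\lesssim k^{-1}\|a\|_{L^\infty}\|f\|_{L^p}$ by H\"older and $L^p$-boundedness of the projections — and $\|a\|_{L^\infty}\lesssim \|\nabla^2 a\|_{L^\infty}$ up to the mean, which can be absorbed. For the high-frequency part of $a$, use $\|\mathbb{P}_{>k/2}a\|_{L^\infty}\lesssim k^{-2}\|\nabla^2 a\|_{L^\infty}$ (Bernstein, summing over dyadic blocks above $k/2$), together with the crude bound $\||\nabla|^{-1}\mathbb{P}_{\neq 0}(\mathbb{P}_{>k/2}a\cdot\mathbb{P}_{\geq k} f)\|_{L^p}\lesssim \|\mathbb{P}_{>k/2}a\cdot\mathbb{P}_{\geq k}f\|_{L^p}$ — wait, that loses the $k^{-1}$; instead on that piece I would only claim $|\nabla|^{-1}$ is bounded on $L^p$ with norm $\lesssim k^{-1}$ because the output still has frequencies $\gtrsim k$ only when the two inputs' frequencies don't cancel, but near-cancellation forces $|\xi-\zeta|\gtrsim k$, i.e. the $a$-frequency is $\gtrsim k$, which is exactly the high-frequency part and contributes the extra $k^{-1}$ via $\|\mathbb{P}_{>k/2}a\|_{L^\infty}\lesssim k^{-1}\cdot k^{-1}\|\nabla^2 a\|_{L^\infty}$, giving overall $k^{-1}\|\nabla^2 a\|_{L^\infty}\|f\|_{L^p}$ after using that $|\nabla|^{-1}\mathbb{P}_{\neq 0}$ is trivially bounded on $L^p$ when... this is the subtle point.

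The main obstacle is handling the output frequencies near zero: when $|\zeta|\geq k$ is large but $|\xi-\zeta|$ is also large and nearly opposite, the output frequency $\xi+\zeta$ can be small, so $|\nabla|^{-1}$ provides no decay — this is precisely where one must pay with derivatives on $a$. The clean way around it, which I would adopt, is to invoke the stationary-phase / integration-by-parts structure behind \cite[Lemma B.1]{BV17}: write $\mathcal R_{ij} = |\nabla|^{-1}$-type kernel $K(y)$ with $|K(y)|\lesssim |y|^{-2}$ on $\mathbb{T}^3$, so that
\[
|\nabla|^{-1}\mathbb{P}_{\neq 0}(a\,\mathbb{P}_{\geq k} f)(x) = \int_{\mathbb{T}^3} K(x-y)\,a(y)\,(\mathbb{P}_{\geq k}f)(y)\,dy,
\]
then move the projection $\mathbb{P}_{\geq k}$ (which on the $\geq k$ side acts as a nice Fourier multiplier $m_k(D)$ with kernel $\Phi_k$ satisfying $\|\Phi_k\|_{L^1}\lesssim 1$ and, crucially, $\int \Phi_k = 0$ with $\|\,|\cdot|^2\Phi_k\|_{L^1}\lesssim k^{-2}$) onto the smooth factor $K(x-y)a(y)$ by duality, Taylor-expand $K(x-y)a(y)$ to second order, and use the two vanishing moments of $\Phi_k$ to pick up $k^{-2}\|\nabla_y^2(K(x-y)a(y))\|$ — the two derivatives land on $a$ (the $K$ is already integrable against $f$), producing $k^{-2}\|\nabla^2 a\|_{L^\infty}$ times an $L^p$-bounded operation on $f$, hence $k^{-1}\cdot k^{-1}\|\nabla^2 a\|_{L^\infty}\|f\|_{L^p}$; one of the two $k^{-1}$'s is spent upgrading $|\nabla|^{-1}$ on the $\geq k$ range, the net being the claimed $k^{-1}\|\nabla^2 a\|_{L^\infty}\|f\|_{L^p}$. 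I expect the bookkeeping of exactly which factor absorbs which power of $k$, and keeping the periodic Calder\'on–Zygmund estimates uniform, to be the only real work; everything else is the standard commutator argument of \cite{BV17} and \cite{MS17}.
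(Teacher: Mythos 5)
Your opening decomposition $a = \mathbb{P}_{\leq k/2}a + \mathbb{P}_{> k/2}a$ is exactly the paper's route, and you should have trusted it instead of abandoning it mid-stream. The confusion you voice about the high-frequency piece (``wait, that loses the $k^{-1}$'') is unfounded: on the piece $\mathbb{P}_{>k/2}a\cdot\mathbb{P}_{\geq k}f$ you do not need $|\nabla|^{-1}$ to gain anything at all. Bernstein on $a$ alone already gives two powers of $k^{-1}$, so the order-zero $L^p$ boundedness of $|\nabla|^{-1}\mathbb{P}_{\neq 0}$ suffices, and the resulting $k^{-2}\|\nabla^2 a\|\,\|f\|_{L^p}$ is even better than the claimed $k^{-1}$. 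There was no obstruction to push past; the decomposition already handles the near-cancellation regime $\xi+\zeta\approx 0$ precisely because it forces the $a$-frequency to be $\gtrsim k$.

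There are, however, two genuine technical problems with how you carry the low/high split. First, you repeatedly invoke $L^\infty$ bounds for the sharp Fourier projections $\mathbb{P}_{\leq k/2}$ and $\mathbb{P}_{>k/2}$ (e.g.\ $\|\mathbb{P}_{\leq k/2}a\|_{L^\infty}\lesssim\|a\|_{L^\infty}$ and $\|\mathbb{P}_{>k/2}a\|_{L^\infty}\lesssim k^{-2}\|\nabla^2 a\|_{L^\infty}$); the sharp cutoffs are not bounded on $L^\infty$ (the underlying Dirichlet-type kernels are not in $L^1$), so these steps do not stand as written. The paper sidesteps this by never applying a projection inside an $L^\infty$ norm: it bounds $\|\mathbb{P}_{\leq k/2}a\|_{L^\infty}$ and $\|\mathbb{P}_{\geq k/2}a\|_{L^\infty}$ via the Sobolev embedding $W^{1,4}(\mathbb{T}^3)\hookrightarrow L^\infty(\mathbb{T}^3)$, commutes the resulting gradient through the projection, and then uses $L^4$ boundedness of $\mathbb{P}_{\leq k/2}$ and the Bernstein-type gain $\||\nabla|^{-1}\mathbb{P}_{\geq k/2}\|_{L^4\to L^4}\lesssim k^{-1}$, landing on $k^{-1}\|\nabla^2 a\|_{L^4}$. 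Second, in your low-frequency piece the claim $\|a\|_{L^\infty}\lesssim\|\nabla^2 a\|_{L^\infty}$ ``up to the mean, which can be absorbed'' has nowhere for the mean to go and is false as written; the paper's chain $\|\mathbb{P}_{\leq k/2}a\|_{L^\infty}\lesssim\|\nabla\mathbb{P}_{\leq k/2}a\|_{L^4}\lesssim\|\nabla a\|_{L^4}\lesssim\|\nabla^2 a\|_{L^4}$ is what actually lets the derivatives accumulate (the last inequality uses that $\nabla a$ has zero mean on $\mathbb{T}^3$).

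Finally, the alternative kernel/moment argument you sketch is both unnecessary and unsound as stated: it hinges on $\|\Phi_k\|_{L^1}\lesssim 1$ and a second-moment bound $\|\,|\cdot|^2\Phi_k\|_{L^1}\lesssim k^{-2}$ for the kernel of the sharp high-pass $\mathbb{P}_{\geq k}$, but that kernel has $L^1$ norm growing like $\log k$, so neither moment estimate holds. If you stayed with your original decomposition, replaced the $L^\infty$ manipulations by $L^4$ plus Sobolev embedding, and handled the low-frequency piece by commuting $\nabla$ through the projection rather than trying to absorb the mean, you would recover the paper's proof.
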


\begin{proof}[Proof of Lemma \ref{Lemma:commutator-est}]
	We follow the proof in \cite{BV17}.
	Note that
	\begin{align*}
	|\nabla|^{-1}\mathbb{P}_{\neq 0}(a \mathbb{P}_{\geq k} f) = |\nabla|^{-1}\mathbb{P}_{\geq k/2}(\mathbb{P}_{\leq k/2} a \mathbb{P}_{\geq k} f) + |\nabla|^{-1}\mathbb{P}_{\neq 0}(\mathbb{P}_{\geq k/2} a \mathbb{P}_{\geq k} f).
	\end{align*}
	As direct consequences of the Littlewood-Paley decomposition and Schauder estimates we have the bounds for $1 < p < \infty$ (see, for example, \cite{Grafakos})
	\begin{align*}
	\|\mathbb{P}_{\leq k/2}\|_{L^p \to L^p} \lesssim 1, \quad \| |\nabla|^{-1}\mathbb{P}_{\geq k/2} \|_{L^p \to L^p} \lesssim k^{-1}, \quad \| |\nabla|^{-1}\mathbb{P}_{\neq 0} \|_{L^p \to L^p} \lesssim 1.
	\end{align*}
	Combining these bounds with H\"{o}lder's inequality and the embedding $W^{1,4}(\mathbb{T}^3) \subset L^{\infty}(\mathbb{T}^3)$, we obtain
	\begin{align*}
	& \quad \| |\nabla|^{-1}\mathbb{P}_{\neq 0}(a \mathbb{P}_{\geq k} f)\|_{L^p}  \lesssim k^{-1} \| \mathbb{P}_{\leq k/2} a \mathbb{P}_{\geq k} f \|_{L^p} + \| \mathbb{P}_{\geq k/2} a \mathbb{P}_{\geq k} f \|_{L^p}  \\
	& \lesssim k^{-1}  (\|\mathbb{P}_{\leq k/2} a\|_{L^{\infty}}  + k\| \mathbb{P}_{\geq k/2} a\|_{L^{\infty}}) \|f\|_{L^p} \lesssim k^{-1}  (\|\nabla \mathbb{P}_{\leq k/2} a\|_{L^4}  + k\|\nabla \mathbb{P}_{\geq k/2} a\|_{L^4}) \|f\|_{L^p}\\
	& \lesssim k^{-1}  (\|\mathbb{P}_{\leq k/2} \nabla  a\|_{L^4}  + k\||\nabla|^{-1} \mathbb{P}_{\geq k/2} |\nabla|\nabla  \mathbb{P}_{\geq k/2}  a\|_{L^4}) \|f\|_{L^p}  \\
	&\lesssim k^{-1}  (\| \nabla  a\|_{L^4}  + \|\nabla^2 \mathbb{P}_{\geq k/2} a\|_{L^4}) \|f\|_{L^p} \lesssim k^{-1}  \| \nabla^2  a\|_{L^4} \|f\|_{L^p}.
	\end{align*}
\end{proof}

It follows from the definition of $w_{q+1}$ that
\begin{align*}
\int_{\mathbb{T}^3}  w_{q+1} dx = \int_{\mathbb{T}^3} \frac{1}{\lambda_{q+1}} \sum_{\overline{\xi} \in \Lambda} \nabla \left( a_{(\overline{\xi})}\eta_{(\overline{\xi})}  W_{(\overline{\xi})} \right) dx   + \int_{\mathbb{T}^3} \frac{1}{\mu}\sum_{\overline{\xi} \in \Lambda^+}{P}_{LH} \mathbb{P}_{\neq 0}\left( a_{(\overline{\xi})}^2 \eta_{(\overline{\xi})}^2 \overline{\xi} \right)dx = 0.
\end{align*}
Hence $\int_{\mathbb{T}^3} \nu (-\Delta)^{\theta} w_{q+1}dx = 0$ and $\dfrac{d}{dt} \int_{\mathbb{T}^3}  w_{q+1} dx = 0$.
We obtain $R_{q+1}$ by plugging $v_{q+1} = v_q + w_{q+1}$ in \eqref{eq:NS-alpha-Reynold-stress}, using \eqref{eq:mcR} and the assumption that $(v_q,R_q)$ solves \eqref{eq:NS-alpha-Reynold-stress}:
\begin{align*}
\nabla \cdot R_{q+1} &= \nabla \cdot \left[\mathcal{R}( \nu(-\Delta)^{\theta} w_{q+1} + \ptl_t w_{q+1}^{(p)} + \ptl_t w_{q+1}^{(c)}) +  v_q \otimes w_{q+1} + w_{q+1} \otimes v_q \right]\\
& \quad + \nabla \cdot \left[(w_{q+1}^{(c)} + w_{q+1}^{(t)}) \otimes w_{q+1} + w_{q+1}^{(p)} \otimes (w_{q+1}^{(c)} + w_{q+1}^{(t)})\right]\\
& \quad \left[\nabla \cdot (w_{q+1}^{(p)} \otimes w_{q+1}^{(p)} - R_{q}) + \ptl_t w_{q+1}^{(t)}\right]  + \nabla (p_{q+1} - p_q)\\
& := \nabla \cdot (\widetilde{R}_{linear} + \widetilde{R}_{corrector} + \widetilde{R}_{oscillation})   + \nabla(p_{q+1}-p_q).
\end{align*}

It follows from Lemma \ref{Lemma:est-w} that
\begin{align*}
\| \widetilde{R}_{corrector} \|_{L^{\infty}_t L^p_x} &\lesssim \left(\|w_{q+1}^{(c)}\|_{L^{\infty}_t L^{2p}_x} + \|w_{q+1}^{(t)}\|_{L^{\infty}_t L^{2p}_x} \right) \left(\|w_{q+1}\|_{L^{\infty}_t L^{2p}_x} + \|w_{q+1}^{(p)}\|_{L^{\infty}_t L^{2p}_x}\right) \\
&\lesssim (\sigma r + \mu^{-1}r^{3/2})r^{3-3/p}\mathcal{C}_1.
\end{align*}

Noting that $\nabla \times \dfrac{w_{q+1}^{(p)}}{\lambda_{q+1}} = w_{q+1}^{(p)} + w_{q+1}^{(c)}$, Lemma \ref{Lemma:est-w} and \eqref{est-mcR} yield that
\begin{align}
&\|\widetilde{R}_{linear}\|_{L^{\infty}_t L^p_x} \nonumber\\
&\lesssim \lambda_{q+1}^{-1}\|\ptl_t \mathcal{R} \nabla \times  (w_{q+1}^{(p)})\|_{L^{\infty}_t L^p_x} + \|\mathcal{R}(\nu(-\Delta)^{\theta}w_{q+1})\|_{L^{\infty}_t L^p_x} \nonumber\\
&\quad + \|v_q \otimes w_{q+1} + w_{q+1} \otimes v_q\|_{L^{\infty}_t L^p_x} \nonumber\\
&\lesssim \lambda_{q+1}^{-1}\|\ptl_t w_{q+1}^{(p)}\|_{L^{\infty}_t L^p_x} + \| |\nabla|^{2\theta - 1} w_{q+1}\|_{L^{\infty}_t L^p_x} +  \|v_q\|_{C^0} \|w_{q+1}\|_{L^{\infty}_t L^p_x}\nonumber \\
& \lesssim
\sigma \mu r^{5/2-3/p} \mathcal{C}_2
+ r^{3/2-3/p}(\lambda_{q+1}^{2\theta-1} + \|v_q\|_{C^0})\mathcal{C}_3. \label{est-R-linear}
\end{align}
This is the crucial estimate to control the fractional viscosity. If we assume that $p \sim 1, r \sim \lambda_{q+1}^{-1}$, we must have $\theta < 5/4$ in order that the second term in \eqref{est-R-linear} is small for $\lambda_{q+1}$ sufficiently large.  %Note that, for fixed $\theta  \in (-\infty,5/4)$, one can choose    $p > 1$, close enough to 1, so that
%\begin{align*}
%
%\end{align*}

It remains to estimate 	$\widetilde{R}_{oscillation}$, which can be handled in the same way as in \cite{BV17}.
It follows from \eqref{eq:rho-stress} that
\begin{align*}
 & \nabla \cdot (w_{q+1}^{(p)} \otimes w_{q+1}^{(p)} - R_{q}) = \nabla \cdot (\sum_{\overline{\xi}, \overline{\xi}' \in \Lambda} a_{(\overline{\xi})} a_{(\overline{\xi}')} \mathbb{W}_{\overline{\xi}} \otimes \mathbb{W}_{(\overline{\xi}')} - R_q)\\
&=   \nabla \cdot (\sum_{\overline{\xi}, \overline{\xi}' \in \Lambda} a_{(\overline{\xi})} a_{(\overline{\xi}')} \mathbb{P}_{\geq \lambda_{q+1} \sigma/2}\mathbb{W}_{(\overline{\xi})} \otimes \mathbb{W}_{(\overline{\xi}')}  ) + \nabla \rho\\
&:= \sum_{\overline{\xi}, \overline{\xi}' \in \Lambda} E_{(\overline{\xi}, \overline{\xi}')}  + \nabla \rho.
\end{align*}
Since $E_{(\overline{\xi}, \overline{\xi}')}$ has zero mean, we can split it as
\begin{align*}
E_{(\overline{\xi}, \overline{\xi}')} + E_{(\overline{\xi}', \overline{\xi})} &= \mathbb{P}_{\neq 0} \left( \nabla  (a_{(\overline{\xi})} a_{(\overline{\xi}')}) \cdot (\mathbb{P}_{\geq \lambda_{q+1} \sigma/2}(\mathbb{W}_{(\overline{\xi})} \otimes \mathbb{W}_{(\overline{\xi}')} + \mathbb{W}_{\overline{\xi'}} \otimes \mathbb{W}_{(\overline{\xi})}) ) \right)\\
& \quad + \mathbb{P}_{\neq 0} \left( a_{(\overline{\xi})} a_{(\overline{\xi}')} \nabla \cdot (\mathbb{W}_{(\overline{\xi})} \otimes \mathbb{W}_{(\overline{\xi}')} + \mathbb{W}_{\overline{\xi'}} \otimes \mathbb{W}_{(\overline{\xi})})  \right) \\
&:= E_{(\overline{\xi}, \overline{\xi}',1)} + E_{(\overline{\xi}, \overline{\xi}',2)}.
\end{align*}

Using \eqref{est-W-xi-L^p}, \eqref{est-mcR} and \eqref{est-commutator-1}, we obtain
\begin{align*}
\|\mathcal{R} E_{(\overline{\xi}, \overline{\xi}', 1)}\|_{L^{\infty}_t L^p_x} & \lesssim \| |\nabla|^{-1} E_{(\overline{\xi}, \overline{\xi}', 1)}\|_{L^{\infty}_t L^p_x} \\
& \lesssim  (\lambda_{q+1} \sigma)^{-1} \|a_{(\overline{\xi})} a_{(\overline{\xi}')}\|_{C^3} \|\mathbb{W}_{(\overline{\xi})} \otimes \mathbb{W}_{(\overline{\xi}')}\|_{L^{\infty}_t L^p_x}\\
& \lesssim  (\lambda_{q+1} \sigma)^{-1} \|a_{(\overline{\xi})} a_{(\overline{\xi}')}\|_{C^3} \|\mathbb{W}_{(\overline{\xi})}\|_{L^{\infty}_t L^{2p}_x} \| \mathbb{W}_{(\overline{\xi}')}\|_{L^{\infty}_t L^{2p}_x}\\
& \lesssim  (\lambda_{q+1} \sigma)^{-1}  r^{3-3/p} \mathcal{C}_3.
\end{align*}

Recall the vector identity $A \cdot \nabla B + B \cdot \nabla A = \nabla (A \cdot B) - A \times (\nabla \times B) - B \times (\nabla \times A)$.
For $\overline{\xi}, \overline{\xi}' \in \Lambda$, using the anti-symmetry of the cross product,  we can write
\begin{align*}
& \nabla \cdot (\mathbb{W}_{(\overline{\xi})} \otimes \mathbb{W}_{(\overline{\xi}')} + \mathbb{W}_{(\overline{\xi}')} \otimes \mathbb{W}_{(\overline{\xi})} )\\
&= \left( W_{(\overline{\xi})} \otimes W_{(\overline{\xi}')} + W_{(\overline{\xi}')} \otimes W_{(\overline{\xi})} \right) \nabla \left( \eta_{(\overline{\xi})} \eta_{(\overline{\xi}')} \right) +  \eta_{(\overline{\xi})} \eta_{(\overline{\xi}')}  \left( W_{(\overline{\xi})}  \cdot \nabla W_{(\overline{\xi}')} + W_{(\overline{\xi}')} \cdot \nabla W_{(\overline{\xi})} \right)\\
&= \left( W_{(\overline{\xi}')} \cdot \nabla \left( \eta_{(\overline{\xi})} \eta_{(\overline{\xi}')} \right) \right) W_{(\overline{\xi})} + \left( W_{(\overline{\xi})} \cdot \nabla \left( \eta_{(\overline{\xi})} \eta_{(\overline{\xi}')} \right) \right) W_{\overline{\xi}' } + \eta_{(\overline{\xi})} \eta_{(\overline{\xi}')} \nabla \left( W_{(\overline{\xi})} \cdot W_{(\overline{\xi}')} \right).
\end{align*}

For the term $E_{(\overline{\xi}, \overline{\xi}',2)}$, first consider the case $\overline{\xi} + \overline{\xi'} \neq 0$. It follows from the above identity and \eqref{est-supp-frequency-W2} that
\begin{align*}
& \quad a_{(\overline{\xi})} a_{(\overline{\xi}')} \nabla \cdot (\mathbb{W}_{(\overline{\xi})} \otimes \mathbb{W}_{(\overline{\xi}')} + \mathbb{W}_{(\overline{\xi}')} \otimes \mathbb{W}_{(\overline{\xi})})\\
%& = a_{(\overline{\xi})} a_{(\overline{\xi}')} \nabla \cdot \mathbb{P}_{\geq \lambda_{q+1}/10} (\mathbb{W}_{(\overline{\xi})} \otimes \mathbb{W}_{(\overline{\xi}')} + \mathbb{W}_{(\overline{\xi}')} \otimes \mathbb{W}_{(\overline{\xi})})\\
&= a_{(\overline{\xi})} a_{(\overline{\xi}')} \nabla \cdot \mathbb{P}_{\geq \lambda_{q+1}/10} \left(\eta_{(\overline{\xi})} \eta_{(\overline{\xi}')} \left( W_{(\overline{\xi})} \otimes W_{(\overline{\xi}')} + W_{(\overline{\xi}')} \otimes W_{(\overline{\xi})}  \right) \right)\\
& = a_{(\overline{\xi})} a_{(\overline{\xi}')} \mathbb{P}_{\geq \lambda_{q+1}/10} \left( \nabla \left( \eta_{(\overline{\xi})} \eta_{(\overline{\xi}')} \right) \cdot \left( W_{(\overline{\xi})} \otimes W_{(\overline{\xi}')} + W_{(\overline{\xi}')} \otimes W_{(\overline{\xi})}  \right) \right)\\
& \quad + a_{(\overline{\xi})} a_{(\overline{\xi}')} \mathbb{P}_{\geq \lambda_{q+1}/10} \left( \eta_{(\overline{\xi})} \eta_{(\overline{\xi}')} \nabla  \left(W_{(\overline{\xi})} \cdot W_{(\overline{\xi}')}\right) \right) \\
& = a_{(\overline{\xi})} a_{(\overline{\xi}')} \mathbb{P}_{\geq \lambda_{q+1}/10} \left( \nabla \left( \eta_{(\overline{\xi})} \eta_{(\overline{\xi}')} \right) \cdot \left( W_{(\overline{\xi})} \otimes W_{(\overline{\xi}')} + W_{(\overline{\xi}')} \otimes W_{(\overline{\xi})}  \right) \right)\\
& \quad + \nabla \left( a_{(\overline{\xi})} a_{(\overline{\xi}')}  \mathbb{W}_{(\overline{\xi})} \cdot \mathbb{W}_{(\overline{\xi}')}  \right) - \nabla \left(a_{(\overline{\xi})} a_{(\overline{\xi}')}\right)  \mathbb{P}_{\geq \lambda_{q+1}/10}\left(\mathbb{W}_{(\overline{\xi})} \cdot \mathbb{W}_{(\overline{\xi}')} \right)\\
& \quad - a_{(\overline{\xi})} a_{(\overline{\xi}')} \mathbb{P}_{\geq \lambda_{q+1}/10} \left( \left(W_{(\overline{\xi})} \cdot W_{(\overline{\xi}')}\right) \nabla\left(\eta_{(\overline{\xi})} \eta_{(\overline{\xi}')}\right) \right),
\end{align*}
where the second term is a pressure, the third can be estimated analogously to $E_{(\overline{\xi}, \overline{\xi}', 1)}$. Also note that the first and fourth term can estimated analogously. Using \eqref{est-eta-xi-L^p}, \eqref{est-mcR} and \eqref{est-commutator-1}, we obtain
\begin{align*}
& \quad \|\mathcal{R} \left( a_{(\overline{\xi})} a_{(\overline{\xi}')} \mathbb{P}_{\geq \lambda_{q+1}/10} \left( \nabla \left( \eta_{(\overline{\xi})} \eta_{(\overline{\xi}')} \right) \cdot \left( W_{(\overline{\xi})} \otimes W_{(\overline{\xi}')} + W_{(\overline{\xi}')} \otimes W_{(\overline{\xi})}  \right) \right) \right)\|_{L^{\infty}_t L^p_x}\\
& \lesssim \lambda_{q+1}^{-1} \|a_{(\overline{\xi})} a_{(\overline{\xi}')}\|_{C^3}  \|\nabla \left( \eta_{(\overline{\xi})} \eta_{(\overline{\xi}')} \right)\|_{L^{\infty}_t L^p_x}\\
& \lesssim   \sigma r^{4-3/p} \mathcal{C}_3.
\end{align*}

Now consider $E_{(\overline{\xi}, -\overline{\xi},2)}$.
We can write
\begin{align*}
&\nabla \cdot (\mathbb{W}_{(\overline{\xi})} \otimes \mathbb{W}_{(-\overline{\xi})} + \mathbb{W}_{(-\overline{\xi})} \otimes \mathbb{W}_{(\overline{\xi})} ) = \left( W_{(-\overline{\xi})} \cdot \nabla \eta_{(\overline{\xi})}^2  \right) W_{(\overline{\xi})} + \left( W_{(\overline{\xi})} \cdot \nabla \eta_{(\overline{\xi})}^2  \right) W_{(-\overline{\xi})}\\
&= (A_{\overline{\xi}} \cdot \nabla \eta_{(\overline{\xi})}^2) A_{\overline{\xi}} + ((\overline{\xi} \times A_{\overline{\xi}}) \cdot \nabla \eta_{(\overline{\xi})}^2) (\overline{\xi} \times A_{\overline{\xi}}) = \nabla \xi_{(\overline{\xi})}^2 - (\overline{\xi}  \cdot \nabla \eta_{(\overline{\xi})}^2) \overline{\xi} = \nabla \eta_{(\overline{\xi})}^2 - \frac{\overline{\xi}}{\mu}\ptl_t \eta_{(\overline{\xi})}^2,
\end{align*}
where we use \eqref{eq:Dt-eta} and the fact that $\{ \overline{\xi}, A_{\overline{\xi}}, \overline{\xi} \times A_{\overline{\xi}} \}$ forms an orthonormal basis of $\mathbb{R}^3$. Therefore, we can write
\begin{align*}
E_{(\overline{\xi}, -\overline{\xi},2)} &= \mathbb{P}_{\neq 0}\left( a_{(\overline{\xi})}^2 \nabla \mathbb{P}_{\geq \lambda_{q+1} \sigma/2} \eta_{(\overline{\xi})}^2 - a_{(\overline{\xi})}^2 \frac{\overline{\xi}}{\mu}\ptl_t \eta_{(\overline{\xi})}^2  \right)\\
&= \nabla \left( a_{(\overline{\xi})}^2 \mathbb{P}_{\geq \lambda_{q+1} \sigma/2} \eta_{(\overline{\xi})}^2 \right) - \mathbb{P}_{\neq 0}\left(\mathbb{P}_{\geq \lambda_{q+1} \sigma/2}(\eta_{(\overline{\xi})}^2) \nabla a_{(\overline{\xi})}^2 \right)\\
& \quad - \mu^{-1} \ptl_t \mathbb{P}_{\neq 0}\left(a_{(\overline{\xi})}^2 \eta_{(\overline{\xi})}^2 \overline{\xi} \right) + \mu^{-1} \mathbb{P}_{\neq 0} \left( \ptl_t\left(a_{(\overline{\xi})}^2\right) \eta_{(\overline{\xi})}^2 \overline{\xi} \right).
\end{align*}
Using the identity $\mathrm{Id} - \mathbb{P}_{LH} = \nabla \Delta^{-1} \operatorname{div}$ , we obtain
\begin{align*}
&\sum_{\overline{\xi}} E_{(\overline{\xi}, -\overline{\xi},2)} + \ptl_t w_{q+1}^{(t)} = \nabla \sum_{\overline{\xi}} \left( a_{(\overline{\xi})}^2 \mathbb{P}_{\geq \lambda_{q+1} \sigma/2} \eta_{(\overline{\xi})}^2 \right) - \nabla \sum_{\overline{\xi}} \mu^{-1} \Delta^{-1} \nabla \cdot \ptl_t \left(a_{(\overline{\xi})}^2 \eta_{(\overline{\xi})}^2 \overline{\xi}\right)\\
& \quad - \sum_{\overline{\xi}} \mathbb{P}_{\neq 0}\left(\mathbb{P}_{\geq \lambda_{q+1} \sigma/2}(\eta_{(\overline{\xi})}^2) \nabla a_{(\overline{\xi})}^2 \right) + \mu^{-1} \sum_{\overline{\xi}}  \mathbb{P}_{\neq 0} \left( \ptl_t\left(a_{(\overline{\xi})}^2\right) \eta_{(\overline{\xi})}^2 \overline{\xi} \right),
\end{align*}
where the first and second terms are pressure terms. Using \eqref{est-eta-xi-L^p}, \eqref{est-mcR} and \eqref{est-commutator-1}, we obtain
\begin{align*}
\|\mathcal{R} \mathbb{P}_{\neq 0}\left(\mathbb{P}_{\geq \lambda_{q+1} \sigma/2}(\eta_{(\overline{\xi})}^2) \nabla a_{(\overline{\xi})}^2 \right)\|_{L^{\infty}_t L^p_x} &\lesssim (\lambda_{q+1} \sigma)^{-1} \| \eta_{(\overline{\xi})} \|_{L^{\infty}_t L^{2p}_x}^2\mathcal{C}_3  \\
&\lesssim (\lambda_{q+1} \sigma)^{-1} r^{3-3/p} \mathcal{C}_3.
\end{align*}
It follows from \eqref{est-eta-xi-L^p} and \eqref{est-mcR}  that
\begin{align*}
\mu^{-1}\|\mathcal{R} \mathbb{P}_{\neq 0} \left( \ptl_t\left(a_{(\overline{\xi})}^2\right) \eta_{(\overline{\xi})}^2 \overline{\xi} \right) \|_{L^{\infty}_t L^p_x} &\lesssim \mu^{-1}\| \ptl_t\left(a_{(\overline{\xi})}^2\right) \eta_{(\overline{\xi})}^2 \overline{\xi} \|_{L^{\infty}_t L^p_x}  \\
&\lesssim \mu^{-1} r^{3-3/p} \mathcal{C}_1.
\end{align*}

Let us now give the explicit definition of $\widetilde{R}_{oscillation}$:
\begin{align*}
&\widetilde{R}_{oscillation} =  \sum_{\overline{\xi}, \overline{\xi}' \in \Lambda} \mathbb{P}_{\neq 0} \left( \nabla  (a_{(\overline{\xi})} a_{(\overline{\xi}')}) \cdot (\mathbb{P}_{\geq \lambda_{q+1} \sigma/2}(\mathbb{W}_{(\overline{\xi})} \otimes \mathbb{W}_{(\overline{\xi}')} + \mathbb{W}_{\overline{\xi'}} \otimes \mathbb{W}_{(\overline{\xi})}) ) \right)\\
&+ \sum_{\overline{\xi}, \overline{\xi}' \in \Lambda, \overline{\xi} \neq \overline{\xi}' } a_{(\overline{\xi})} a_{(\overline{\xi}')} \mathbb{P}_{\geq \lambda_{q+1}/10} \left( \nabla \left( \eta_{(\overline{\xi})} \eta_{(\overline{\xi}')} \right) \cdot \left( W_{(\overline{\xi})} \otimes W_{(\overline{\xi}')} + W_{(\overline{\xi}')} \otimes W_{(\overline{\xi})}  \right) \right)\\
&  - \sum_{\overline{\xi}, \overline{\xi}' \in \Lambda, \overline{\xi} \neq \overline{\xi}' } \nabla \left(a_{(\overline{\xi})} a_{(\overline{\xi}')}\right)  \mathbb{P}_{\geq \lambda_{q+1}/10}\left(\mathbb{W}_{(\overline{\xi})} \cdot \mathbb{W}_{(\overline{\xi}')} \right)\\
&  - \sum_{\overline{\xi}, \overline{\xi}' \in \Lambda, \overline{\xi} \neq \overline{\xi}' } a_{(\overline{\xi})} a_{(\overline{\xi}')} \mathbb{P}_{\geq \lambda_{q+1}/10} \left( \left(W_{(\overline{\xi})} \cdot W_{(\overline{\xi}')}\right) \nabla\left(\eta_{(\overline{\xi})} \eta_{(\overline{\xi}')}\right) \right)\\
& - \sum_{\overline{\xi} \in \Lambda} \mathbb{P}_{\neq 0}\left(\mathbb{P}_{\geq \lambda_{q+1} \sigma/2}(\eta_{(\overline{\xi})}^2) \nabla a_{(\overline{\xi})}^2 \right) + \mu^{-1} \sum_{\overline{\xi} \in \Lambda}  \mathbb{P}_{\neq 0} \left( \ptl_t\left(a_{(\overline{\xi})}^2\right) \eta_{(\overline{\xi})}^2 \overline{\xi} \right).
\end{align*}

Finally, we estimate the time support of $R_{q+1}$. Using \eqref{est-supp-w} we obtain
\begin{align*}
\operatorname{supp}_t R_{q+1} \subset \operatorname{supp}_t w_{q+1} \cup \operatorname{supp}_t R_{q}  \subset N_{\delta_{q+1}}(\operatorname{supp}_t R_q).
\end{align*}

Now we choose the parameters $r, \sigma, \mu$. Fix $\alpha$ so that
\begin{align*}
\max\{0,\frac{2}{3}(2\theta - 1)\} < \alpha < 1,
\end{align*}
which is possible since $\theta \in (-\infty,5/4)$. Fix
\begin{align}
r = \lambda^{\alpha}_{q+1}, \quad \sigma = \lambda^{-(\alpha + 1)/2}_{q+1}, \quad \mu =  \lambda^{(5\alpha + 1)/4}_{q+1}.
\end{align}
Clearly \eqref{ass} is satisfied.
Choose $p > 1$ sufficiently close to $1$ so that
\begin{align*}
-\frac{\alpha+1}{2} + \frac{5\alpha + 1}{4} +\left(\frac{5}{2} - \frac{3}{p}\right)\alpha < 0
, \quad \left(\frac{3}{2} - \frac{3}{p}\right)\alpha + \max(0,2\theta - 1) < 0, \\
-\frac{5\alpha + 1}{4} + \left(\frac{9}{2} - \frac{3}{p}\right)\alpha < 0,  \quad -\frac{1-\alpha}{2} + \left(3 - \frac{3}{p}\right)\alpha < 0.
\end{align*}
Note that $\mathcal{C}_N$ is independent of $\lambda_{q+1}$, due to \eqref{est-a-C^N}. Combining the above estimates with Lemma \ref{Lemma:est-w}, it is easy to check that, by taking $\lambda_{q+1}$ sufficiently large, we arrive at \eqref{est-R-q+1}, \eqref{est-w_q-L^2_x} and \eqref{est-w_q-W^1_x}. This completes the proof of Lemma \ref{Lemma:Iteration}.

\vskip 0.125in

{\bf Acknowledgement}
The authors would like to thank H. Ibdah and the anonymous referee for carefully reading the paper and for their constructive suggestions.
The authors would also like to thank the ``The Institute of Mathematical Sciences", Chinese University of Hong Kong, for the warm and kind hospitality during which part of this work was completed.   The work of T.L. is supported in part by NSFC Grants 11601258. The work of E.S.T.  is supported in part by the ONR grant N00014-15-1-2333, the Einstein Stiftung/Foundation - Berlin, through the Einstein Visiting Fellow Program, and by the John Simon Guggenheim Memorial Foundation.

\end{document}